\title{The equivariant Ehrhart theory of \\
polytopes
with order-two symmetries}
\author{Oliver Clarke\thanks{Oliver Clarke is an overseas researcher under Postdoctoral Fellowship of Japan Society for the Promotion of Science (JSPS).}, Akihiro Higashitani, and Max K\"olbl}
\date{\empty}
\theoremstyle{plain}
\newtheorem{theorem}{Theorem}[section]
\newtheorem{conjecture}[theorem]{Conjecture}
\newtheorem{proposition}[theorem]{Proposition}
\newtheorem{lemma}[theorem]{Lemma}
\theoremstyle{definition}
\newtheorem{example}[theorem]{Example}
\theoremstyle{remark}
\newtheorem{remark}[theorem]{Remark}
\DeclarePairedDelimiter\abs{\lvert}{\rvert}
\DeclarePairedDelimiter\scalar{\langle}{\rangle}
\DeclarePairedDelimiter{\set}{\{}{\}}
\DeclarePairedDelimiter\ceil{\lceil}{\rceil}
\DeclarePairedDelimiter\floor{\lfloor}{\rfloor}
\newcommand{\RR}{\mathbb R}
\newcommand{\ZZ}{\mathbb Z}
\newcommand{\QQ}{\mathbb Q}
\newcommand{\conv}{{\rm Conv}}
\DeclareMathOperator{\diag}{Diag}
\DeclareMathOperator{\ehr}{Ehr}
\DeclareMathOperator{\GL}{GL}
\DeclareMathOperator{\tr}{tr}
\DeclareMathOperator{\aut}{Aut}
\@ifdefinable\@latex@chi{\let\@latex@chi\chi}
\renewcommand*\chi{{\@latex@chi\smash[t]{\mathstrut}}} 
\begin{document}

\maketitle

\begin{abstract}
    We study the equivariant Ehrhart theory of families of polytopes that are invariant under a non-trivial action of the group with order two. We study families of polytopes whose equivariant $H^\ast$-polynomial both succeed and fail to be effective, in particular, the symmetric edge polytopes of cycle graphs and the rational cross-polytope. The latter provides a counterexample to the effectiveness conjecture if the requirement that the vertices of the polytope have integral coordinates is loosened to allow rational coordinates. Moreover, we exhibit such a counterexample whose Ehrhart function has period one and coincides with the Ehrhart function of a lattice polytope.
\end{abstract}

\section{Introduction}

Ehrhart theory is the enumerative study of the lattice points of polytopes and their dilations \cite[Section~3]{beck2007computing}. Let $P \subseteq \RR^d$ be a polytope. The \textit{Ehrhart function} $L_P(m) = |mP \cap \ZZ^d|$ with $m \in \ZZ_{\geq 0}$ counts the number of lattice points of $mP$. If $P$ is a lattice polytope then $L_P(m)$ is a polynomial called the \textit{Ehrhart polynomial of $P$}.
More generally, if $P$ is a rational polytope, i.e.~its vertices have rational coordinates, then $L_P(m)$ becomes a quasipolynomial. We say that $Q(t)=c_d(t)t^d+\cdots+c_1(t)t+c_0(t)$ is a \textit{quasipolynomial} if $c_0,\ldots,c_d$ are periodic functions in $t$ and define the \textit{period of $Q(t)$} to be the least common multiple of the periods of $c_0,\ldots,c_d$. Note that a usual polynomial can be regarded as a quasipolynomial with period one.
The data of $L_P$ is expressed as a power series $\ehr(P, t) = \sum_{m \geq 0} L_P(m) t^m$ called the \textit{Ehrhart series}. If $P$ is a rational polytope, then
\[
\ehr(P, t) = \frac{h^\ast_P(t)}{(1-t^N)^{d+1}}
\]
for some polynomial $h^\ast_P(t) \in \ZZ[t]$ called the $h^\ast$-polynomial of $P$. The value of $N$ is the \textit{denominator} of $P$, which is defined as the smallest positive integer $\ell$ such that $\ell P$ is a lattice polytope. It is well known that the denominator of $P$ is divisible by the period of $L_P(t)$. While all lattice polytopes have period one, the converse is not true. A rational polytope $P$ whose Ehrhart quasipolynomial has period one is said to be a \textit{pseudo-integral polytope} or \textit{PIP}. We will see examples of PIPs in Section~\ref{sec: rational cross polytopes}.

In \cite{stapledon2011equivariant}, Stapledon introduces a generalisation of Ehrhart theory to study polytopes that exhibit symmetries. The theory has connections to toric geometry, representation theory, and mirror symmetry. Suppose that the polytope $P$ is invariant (up to translation) under the action of a finite group $G$ acting linearly on the lattice by a representation $\rho : G \rightarrow \GL(\ZZ^d)$. The \textit{equivariant Ehrhart series} $\ehr_\rho(P, t) \in R(G)[[t]]$ is a power series in $t$ with coefficients in the representation ring $R(G)$.
The series $\ehr_\rho(P, t)$ can be thought of as a union of the Ehrhart series of fixed sub-polytopes of $P$. Explicitly, for each $g \in G$, the power series $\ehr_\rho(P, t)(g) \in \ZZ[[t]]$ is the Ehrhart series for the sub-polytope of $P$ fixed by $g$. In particular, $\ehr_\rho(P, t)(1_G) = \ehr(P, t)$ recovers the original Ehrhart series. See Remark~\ref{rmk: prelim setup evaluate at identity}.

The analogue for the $h^\ast$-polynomial in equivariant Ehrhart theory is the equivariant $H^\ast$-series denoted $H^\ast[t] = \sum_{i \geq 0} H^\ast_i t^i \in R(G)[[t]]$.
In older literature, it is also denoted as $\varphi[t]$.
In general, this series is not a polynomial. However, one of the central questions in equivariant Ehrhart theory is to determine when $H^\ast[t]$ is a polynomial and how to interpret its coefficients $H^\ast_i \in R(G)$. Recall that for a lattice polytope $P$, the coefficients of its $h^\ast$-polynomial are non-negative \cite{stanley1980decompositions}. The equivariant analogue for non-negativity is effectiveness. We say that $H^\ast[t]$ is \textit{effective} if each of its coefficients $H^\ast_i$ is a non-negative integer sum of irreducible representations.

\begin{conjecture}[{Effectiveness conjecture \cite[Conjecture~12.1]{stapledon2011equivariant}}]\label{effectiveness_conjecture}
Fix the main setup from Section~\ref{sec: prelim main setup} and let $P$ be a lattice polytope. The equivariant $H^\ast$-series $H^\ast[t]$ is a polynomial if and only if $H^\ast[t]$ is effective.
\end{conjecture}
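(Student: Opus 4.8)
The plan is to recast both conditions in the evaluation picture. Recall from the main setup that $H^\ast[t]$ is determined by the relation $\ehr_\rho(P,t) = H^\ast[t]/\bigl((1-t)\det(I - t\rho)\bigr)$, so that evaluating at $g \in G$ gives $H^\ast[t](g) = (1-t)\det(I - t\rho(g))\,\ehr(P^g, t)$, where $P^g$ is the subpolytope fixed by $g$. Since characters separate virtual representations, $H^\ast[t] = \sum_i H^\ast_i t^i$ is a polynomial in $R(G)[t]$ precisely when each fiber $H^\ast[t](g)$ is a polynomial in $t$, i.e.\ when the cyclotomic poles of $\ehr(P^g,t)$ are cancelled by $(1-t)\det(I - t\rho(g))$ for every $g$. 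Effectiveness, by contrast, is the requirement that every multiplicity $\langle H^\ast_i, V\rangle = \tfrac{1}{|G|}\sum_g H^\ast_i(g)\overline{\chi_V(g)}$ be non-negative, for each $i$ and each irreducible $V$. I treat the two directions of the biconditional separately.

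For the direction \emph{effective $\Rightarrow$ polynomial} I would argue by dimension. Evaluating the defining relation at the identity and using $\rho(1_G) = I$ gives $H^\ast[t](1_G) = (1-t)\det(I - tI)\,\ehr(P,t) = h^\ast_P(t)$, which is an honest polynomial of degree at most $d$ because $P$ is a lattice polytope. Hence $\dim H^\ast_i = H^\ast_i(1_G) = 0$ for all $i > d$. If $H^\ast[t]$ is effective then each $H^\ast_i$ is a genuine representation, and a genuine representation of dimension zero is the zero representation, so $H^\ast_i = 0$ for $i > d$ and $H^\ast[t]$ is a polynomial. This direction is clean and requires no further input.

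The real work is the converse, \emph{polynomial $\Rightarrow$ effective}, and here my plan is to realise the coefficients $H^\ast_i$ as characters of honest graded modules. Concretely, I would attach to $P$ and $\rho$ the projective toric variety $X_P$, on which $G$ acts by lattice automorphisms compatibly with the torus, and attempt to identify $\sum_i H^\ast_i t^i$ with the equivariant Poincar\'e series $\sum_i \bigl[IH^{2i}(X_P)\bigr] t^i$ of its intersection cohomology. Because each $IH^{2i}(X_P)$ is an actual $G$-module, its character has non-negative multiplicities, so such an identification forces effectiveness. The polynomiality hypothesis is what should make the matching possible: the cancellation of cyclotomic factors for every $g$ is the combinatorial shadow of the Lefschetz/purity structure that governs the equivariant Betti characters, and I would try to establish the fiberwise equality $H^\ast[t](g) = \sum_i \tr\bigl(g \mid IH^{2i}(X_P)\bigr)\,t^i$ using a $G$-invariant half-open decomposition of the cone over $P$ together with equivariant hard Lefschetz.

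The main obstacle is exactly this last identification in the singular, equivariant setting. A lattice polytope need not admit a $G$-invariant unimodular, or even simplicial, triangulation, so ordinary cohomology and the naive Stanley-type decomposition fail to carry a $G$-module structure, forcing the passage to intersection cohomology and the decomposition theorem; moreover the $G$-action mixes the torus-fixed combinatorics with genuine lattice automorphisms, so the cells of any half-open decomposition are permuted in ways that must be organised into bona fide $G$-representations rather than mere virtual characters. I expect the crux to be proving that the polynomiality condition, namely cyclotomic cancellation at every $g$, is equivalent to the vanishing of the obstruction to this module structure. Should the toric route prove intractable, a fallback is to control the multiplicities directly by a Molien-type averaging of the fiberwise series $H^\ast[t](g)$, in which case the difficulty migrates to showing that global non-negativity of $\langle H^\ast_i, V\rangle$ follows from the local polynomiality of each fiber.
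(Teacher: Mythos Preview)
The statement you are attempting to prove is a \emph{conjecture}: the paper does not prove it, and indeed states explicitly that the implication ``polynomial $\Rightarrow$ effective'' is currently open. So there is no proof in the paper to compare against. What can be assessed is whether your proposal constitutes a proof, and it does not.

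Your argument for ``effective $\Rightarrow$ polynomial'' is correct and is the standard one: effectiveness forces $\dim H^\ast_i \ge 0$, while $H^\ast[t](1_G) = h^\ast_P(t)$ has bounded degree, so all high-degree coefficients vanish. This direction is already known.

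For the converse, your plan has a concrete gap: the proposed identification $H^\ast_i = [IH^{2i}(X_P)]$ is false already for the trivial group. Take $P$ the standard $n$-simplex, so $X_P = \PP^n$. Then $H^\ast[t] = h^\ast_P(t) = 1$, whereas $\sum_i \dim IH^{2i}(\PP^n)\, t^i = 1 + t + \cdots + t^n$. The $h^\ast$-polynomial counts lattice points in dilates, while the intersection cohomology of $X_P$ encodes the combinatorics of the face fan; these are different invariants and do not agree in general. So the object you are trying to match $H^\ast[t]$ with is simply the wrong one, and no amount of equivariant machinery will repair this. Stapledon's original geometric link is not to $X_P$ itself but to the cohomology of a $G$-invariant \emph{non-degenerate hypersurface} inside $X_P$; the existence of such a hypersurface is condition~(1) of the full conjecture recalled later in the paper, and it is now known (Santos--Stapledon, cited there) that effectiveness does \emph{not} imply the existence of such a hypersurface. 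Thus even the corrected geometric route cannot, as stated, yield a proof of the open direction. Your fallback Molien-averaging idea is likewise only a reformulation: it rewrites effectiveness as non-negativity of certain weighted sums of the $H^\ast[t](g)$, but gives no mechanism for deducing that non-negativity from polynomiality alone.
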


It is known that if $H^\ast[t]$ is effective, then it is a polynomial. However the converse is currently open. The equivariant Ehrhart theory for certain families of polytopes is well studied. In each of the following examples the effectiveness conjecture has been verified: simplices \cite[Proposition~6.1]{stapledon2011equivariant}, for which the coefficients of $H^\ast[t]$ are permutation representations; the hypercube \cite[Section~9]{stapledon2011equivariant} under its full symmetry group; the permutahedron \cite{ardila2019permutahedron} under the symmetric group; graphic zonotopes of the path graph \cite[Section~3.1]{elia2022techniques} with the $\ZZ/2\ZZ$ action; hypersimplices \cite[Theorem~3.57]{elia2022techniques} with the symmetric group action.

In this paper we study two new families of polytopes; the symmetric edge polytopes of the cycle graph under the induced action of the automophism group of the graph, and rational cross-polytopes under the action of coordinate reflections. We describe the fixed polytopes in each case, which are related to rational cross-polytopes. We compute the equivariant Ehrhart series in each case to verify the effectiveness conjecture. In particular, in Example~\ref{ex: cross polytope rational non effective} we see that PIP need not satisfy the effectiveness conjecture if the assumption that $P$ is a lattice polytope is dropped.

\noindent \textbf{Outline.} In Section~\ref{sec: prelim} we introduce the necessary preliminaries with the aim of fixing the main setup for equivariant Ehrhart theory. In Section~\ref{sec: prelim representations of groups}, we recall some basics of representation theory of finite groups, in particular the representation ring $R(G)$ which serves as the coefficient ring for the equivariant Ehrhart series. In Section~\ref{sec: prelim actions on lattices}, we fix our notation for actions of groups on lattices and describe their affine lattices. In Section~\ref{sec: prelim main setup}, we recall the main setup of equivariant Ehrhart theory. In Remark~\ref{rmk: prelim alternative setup}, we also give an alternative but equivalent setup.

In the next sections, we analyse two families of symmetric polytopes. In Section~\ref{sec: symmetric edge polytopes}, we consider the symmetric edge polytopes of cycle graphs under symmetry induced by the dihedral group acting on the graph. We prove Theorems~\ref{thm: prime cycles} and \ref{thm: symm edge poly order 2 group} which show that Conjecture \ref{effectiveness_conjecture} holds in the following respective cases. Firstly, for the cycle graph with a prime number of vertices under the action of its full automorphism group and secondly for any cycle graph with at least three vertices under the action of a reflection. In Section~\ref{sec: rational cross polytopes}, we consider a family of rational cross-polytopes under the group of coordinate reflections. We prove Theorem~\ref{thm: cross polytope coord reflection} which computes the equivariant $H^\ast$-series for all polytopes we consider. We note that this family contains rational polytopes with non-effective polynomial $H^\ast$-series; see Example~\ref{ex: cross polytope rational non effective}.

\noindent \textbf{Acknowledgements.} We would like to express our gratitude to the anonymous reviewers for paying a great deal of attention to this paper and supplying many helpful comments and suggestions.

\section{Preliminaries}\label{sec: prelim}

Equivariant Ehrhart theory concerns the study of polytopes and their lattice points under a given group action. In this section we introduce the necessary preliminaries and fix the main setup following \cite{stapledon2011equivariant}. We begin with some background on the representation theory of finite groups \cite{isaacs1994character, curtis1966representation}.

\subsection{Representations of groups}\label{sec: prelim representations of groups}
Let $G$ be a finite group and $K$ a field. A \textit{finite dimensional $K$-representation of $G$} is a homomorphism $\rho: G \rightarrow \GL(V)$ from $G$ to the group of invertible linear maps of an $n$-dimensional $K$-vector space $V$. Fixing a basis for $V$ identifies $\rho(g)$ with an $n \times n$ matrix, for each $g \in G$. Equivalently, a representation is a module $V$ for the group ring $KG$ where $g \in G \subseteq KG$ {acts via} the linear map $\rho(g)$. The \textit{character of $\rho$} is the function $\chi : G \rightarrow K$ defined by the trace $\chi(g) = \tr(\rho(g))$. We say that a representation is \textit{irreducible} if it contains no proper $G$-invariant subspaces, \textit{indecomposable} if it cannot be written as a non-trivial direct sum of representations, and \textit{semisimple} if it is a direct sum of irreducible representations.

The \textit{representation ring} $R(G)$ is the set of formal differences of isomorphism classes of representations of $G$. The addition and multiplication structure of $R(G)$ are given by direct sums and tensor products respectively. Given a $KG$-module $V$, we write $[V]$ for its isomorphism class in $R(G)$. So given $[V]$ and $[W]$ in $R(G)$ we have $[V]+[W] = [V \oplus W]$ and $[V]\cdot[W] = [V \otimes_K W]$. In this paper, we work with representations defined over $\RR$. In this case Maschke's Theorem holds, so all representations are semisimple. In particular, all indecomposable representations are irreducible and any representation is a direct sum of irreducible representations. Therefore, $R(G)$ is a free abelian group generated by the irreducible representations of $G$. Since the isomorphism class of a representation is determined uniquely by its character, we identify elements of $R(G)$ with $\ZZ$-linear combinations of characters.

\noindent \textbf{Permutation representations.} Suppose $G$ acts on a finite set $S$. Then the action induces a so-called \textit{permutation representation} constructed as follows. Let $V$ be the vector space over some field $K$ with basis $\{e_s : s \in S\}$. We define the permutation representation $\rho : G \rightarrow \GL(V)$ by its action on the basis $\rho(g)(e_s) = e_{g(s)}$. Each matrix $\rho(g)$ is a permutation matrix, hence the character of the representation is given by $\chi(g) = |\{s \in S : g(s) = s\}|$. We say that a $KG$-module $V$ is a permutation representation if it is isomorphic to a permutation representation.

\subsection{Group actions on lattices} \label{sec: prelim actions on lattices}

Let $M \cong \ZZ^{n+1}$ be a lattice with a distinguished basis and  $G$ a finite group. We say that \textit{$G$ acts on $M$} if there is a homomorphism $\rho : G \rightarrow \GL_{n+1}(\ZZ)$ from $G$ to the group of invertible $(n+1) \times (n+1)$ matrices with entries in $\ZZ$. Note, this action extends naturally to the vector space $M_\RR = M \otimes_\ZZ \RR$. Assume that $G$ fixes a lattice point $e \in M \backslash \{0\}$. We proceed to describe how $M$ decomposes into a disjoint union of $G$-invariant affine lattices.

By assumption $M$ has a basis, so we denote by $\langle \cdot, \cdot \rangle : M \times M \rightarrow \ZZ$ the standard inner-product. We construct a new inner-product by averaging over the group:
\[
\langle u, v \rangle_G := \frac{1}{|G|} \sum_{g \in G} \langle \rho(g)u, \rho(g)v\rangle \in \QQ.
\]
Using the above inner-product, we observe two important properties about the orthogonal space $e^\perp \subseteq M_\RR$. Firstly, we have that $e^\perp$ is $G$-invariant, which follows from the fact that $\langle\rho(g)u, \rho(g)v\rangle_G = \langle u,v \rangle_G$ for all $u,v \in M_\RR$ and $g \in G$. Secondly, we may choose a basis for $e^\perp$ that lies in $M$, since $\langle u, v\rangle_G \in \QQ$ for all $u,v \in M$. It follows that the lattice $N$ generated by $e^\perp \cap M$ and $e$ has rank $n+1$. Therefore, $N$ is a finite index subgroup of $M$ and we write $[M:N]$ for the index.
We define the \textit{affine space $(M_i)_\RR$} and the \textit{affine lattice $M_i$} at height $i \in \ZZ$ as follows:
\[
(M_i)_\RR = \frac{i}{[M : N]} e + e^\perp
\quad \text{and} \quad
M_i = (M_i)_\RR \cap M.
\]
Since $e^\perp$ and $M$ are $G$-invariant, we have that $M_i$ is $G$-invariant for each $i \in \ZZ$. Note that $M = \bigcup_{i \in \ZZ} M_i$ is a disjoint union and for each $v \in M_i$ we have $v + M_j = M_{i+j}$.

\begin{example}\label{ex: prelim action on lattice}
Let $G = \{1, \sigma \} \le S_4$ be a subgroup of the symmetric group on four letters with $\sigma = (1,2)(3,4)$. The permutation representation $\rho$ maps $\sigma$ to the permutation matrix
\[
\rho(\sigma) =
\begin{bmatrix}
0 & 1 & 0 & 0 \\
1 & 0 & 0 & 0 \\
0 & 0 & 0 & 1 \\
0 & 0 & 1 & 0
\end{bmatrix}
\in \GL_4(\RR).
\]
In particular, this matrix lies in $\GL_4(\ZZ)$, hence $G$ preserves the lattice $M = \ZZ[e_1, e_2, e_3, e_4]$. Notice that $e = e_1+e_2+e_3+e_4$ is fixed by the action of $G$. We compute a basis $F$ that decomposes $\rho(\sigma)$ as a block diagonal matrix:
\[
F =
\left\{
\begin{bmatrix}
1 \\1 \\1 \\1 \\
\end{bmatrix}, \
\begin{bmatrix}
1 \\-1 \\0 \\0 \\
\end{bmatrix}, \
\begin{bmatrix}
1 \\0 \\-1 \\0 \\
\end{bmatrix}, \
\begin{bmatrix}
1 \\0 \\0 \\-1 \\
\end{bmatrix}
\right\} \quad \text{and} \quad
\rho(\sigma)_F =
\begin{bmatrix}
1 & 0 & 0 & 0 \\
0 & -1 & -1 & -1 \\
0 & 0 & 0 & 1 \\
0 & 0 & 1 & 0
\end{bmatrix}.
\]
The orthogonal lattice $M_0$ is the $3$-dimensional lattice generated by $F \backslash \{e\}$. Observe that the sublattice $N = \ZZ[F]$ has index $4$ inside $M$. Therefore, the affine lattice $M_1 = (\frac 14 e + (M_0)_\RR) \cap M$ is equal to the lattice affinely generated by $\{e_1, e_2, e_3, e_4 \}$.
\end{example}

\subsection{Main setup} \label{sec: prelim main setup}

Let $M \cong \ZZ^{n+1}$ be a lattice with a distinguished basis and $G$ a finite group that acts on $M$ by $\rho : G \rightarrow \GL_{n+1}(\ZZ)$. Assume that there is a lattice point $e \in M \backslash \{0\}$ fixed by $G$. Let $P \subseteq (M_1)_\RR$ be a rational $G$-invariant polytope. For each non-negative integer $m \in \ZZ_{\geq 0}$, we obtain a permutation representation of the lattice points $mP \cap M \subseteq M_m$ and denote by $\chi_{mP}$ its character. The \textit{equivariant Ehrhart series} is an element of the ring of formal power series $R(G)[[t]]$ given by:
\[
\ehr_\rho(P, t) =
\sum_{m \geq 0} \chi_{mP} t^m =
\frac{H^\ast[t]}{\det[I - t \cdot \rho]} =
\frac{H^\ast[t]}{(1-t)\det[I - t \cdot \rho|_{M_0}]}
\]
where $H^\ast[t] \in R(G)[[t]]$ is the \textit{equivariant $H^\ast$-series}. The denominator $\det[I - t \cdot \rho]$ denotes the formal alternating sum
$\sum_{i = 0}^{n+1} [\Lambda^{i}M_{\RR}]  (-t)^i \in R(G)[t],$
where $\Lambda^i M_\RR$ is the $i$-th alternating power of the representation $M_\RR$. If the character of the above alternating sum is evaluated at an element $g \in G$, then the resulting polynomial is equal to $\det[I - t \cdot \rho(g)]$ where $I$ is the identity matrix, see \cite[Lemma~3.1]{stapledon2011equivariant}.

By assumption, $M_\RR = \langle e \rangle_\RR \oplus (M_0)_\RR$ is a $G$-invariant decomposition of $M_\RR$. So, for each $g \in G$, we may write $\rho(g) = [1] \oplus \rho(g)|_{M_0}$ as a block diagonal matrix, hence $\det[I - t \cdot \rho(g)] = (1-t)\det[I - t \cdot \rho(g)|_{M_0}]$.

\begin{remark}\label{rmk: prelim setup evaluate at identity}
The equivariant Ehrhart series and $H^\ast$-series are a generalisation of the usual Ehrhart series and $h^\ast$-polynomial. If the equivariant Ehrhart series is evaluated at the identity element, then each character $\chi_{mP}(1_G)$ is equal to the number of lattice points of $mP$. Since $\det[I - t\cdot \rho(1_G)] = (1-t)^{n+1}$, it follows that the equivariant Ehrhart series evaluated at $1_G$ is equal to the classical Ehrhart series $\ehr(P,t)$.

The equivariant Ehrhart series contains all the data about the Ehrhart series for fixed sub-polytopes of $P$. Let $M_\RR^g = \{x \in M_\RR : g(x) = x \}$ be the subspace of $M_\RR$ fixed by $g \in G$. 
For each $m \geq 0$ and $g \in G$, the value $\chi_{mP}(g)$ is the number of lattice points of $mP$ fixed by $g$. Equivalently, $\chi_{mP}(g)$ is the number of lattice points in the $m$-th dilate of the \textit{fixed polytope} $P^g = P \cap M_\RR^g$. Therefore, the evaluation of the equivariant Ehrhart series at $g \in G$ is the Ehrhart series $\ehr(P^g, t)$.
\end{remark}

\begin{remark}\label{rmk: prelim alternative setup}
The setup may be equivalently defined by fixing: a group action $\rho|_{M_0}$ of $G$ on a lattice $M_0 \cong \ZZ^n$; a rational polytope $P \subseteq (M_1)_\RR$, where $M_1 \cong \ZZ^n$ is a lattice of the same rank; and a lattice-preserving isomorphism between $(M_1)_\RR$ and $(M_0)_\RR$, which induces an action of $G$ on $P$. We require that, for each $g \in G$, the polytope $g(P) = (-v_g) + P$ differs from $P$ only by a translation $v_g \in M_0$. So, for all $g, h \in G$ we have that 
\[
(gh)(P) + v_{gh} = P = g(P) + v_g = g(h(P) + v_h) + v_g = (gh)(P) + g(v_h) + v_g,
\]
hence $v_{gh} = g(v_h) + v_g$.

We recover the original setup by taking $e \in |G|\cdot P \subseteq (M_{|G|})_\RR$ to be any $G$-invariant lattice point of the $|G|$th dilate of $P$. Explicitly, for all $g \in G$, we require $g(e) + |G| \cdot v_g = e$. For example, such a point can always be constructed from any lattice point $p \in P$ by summing over the group: $e = \sum_{g \in G} \left( g(p) + v_g \right)$.
We define $M$ to be the lattice generated by $M_0$ and $M_1$ where $M_0$ is a lattice that contains the origin and $M_1$ is the affine lattice at height $1$ such that the orthogonal projection of $(M_1)_\RR$ onto $(M_0)_\RR$ sends $\frac{1}{|G|}e \in (M_1)_\RR$  to $0 \in M_0$ and differs from the lattice-preserving isomorphism by a translation. Concretely, we may take $M = \ZZ \times M_0 \cong \ZZ^{n+1}$ and define the action of $G$ on $M$ by the matrix
$\rho(g) = 
\begin{bmatrix}
1 & 0 \\
v_g & \rho|_{M_0}(g)
\end{bmatrix}.
$
Note that $\rho$ is indeed a group homomorphism. That is, for all $g$ and $h$ in $G$ we have
\[
\rho(g) \rho(h)
= 
\begin{bmatrix}
1 & 0 \\
v_{g} & \rho|_{M_0}(g)
\end{bmatrix}
\begin{bmatrix}
1 & 0 \\
v_{h} & \rho|_{M_0}(h)
\end{bmatrix}
=
\begin{bmatrix}
1 & 0 \\
g(v_h) + v_g & \rho|_{M_0}(gh)
\end{bmatrix}
=
\rho(gh)
\]
since $g(v_h) + v_g = v_{gh}$.

Let $\lambda \in \ZZ_{>0}$ be the smallest positive integer such that $\frac{\lambda}{|G|} e$  is a lattice point. The value of $\lambda$ coincides with the index of the sublattice $N$ in $M$ from the original setup.
\end{remark}

\begin{example}[Continuation of Example~\ref{ex: prelim action on lattice}]\label{ex: simplex with a double reflection}
Recall $G = \{1, \sigma\} \le S_4$, with $\sigma = (1,2)(3,4)$, acting by a permutation representation on $M = \ZZ^4$. Let $P = \conv\{e_1, e_2, e_3, e_4\} \subseteq (M_1)_\RR$ be a $G$-invariant $3$-dimensional simplex. The permutation character $\chi_{mP}$ counts the number of lattice points of $mP \subseteq M_m$ fixed by each $g \in G$. Explicitly, we have
\[
\chi_{mP}(1) = \binom{m+3}{3} \quad \text{and} \quad \chi_{mP}(\sigma) =
\begin{cases}
\frac m2 + 1 & \text{if } 2 \mid m, \\
0 & \text{otherwise.}
\end{cases}
\]
Computing the equivariant Ehrhart series, we have
\[
\sum_{m \geq 0} \chi_{mP}(1)t^m = \frac{1}{(1-t)^4}
\quad \text{and} \quad
\sum_{m \geq 0} \chi_{mP}(\sigma)t^m = \frac{1}{(1-t^2)^2}.
\]
For each $g \in G$, we observe that the equivariant Ehrhart series is given by $\frac{1}{\det[I - t\cdot\rho(g)]}$. Therefore, the equivariant $H^\ast$-series is a polynomial given by $H^\ast[t] = 1$.
\end{example}

\begin{example}
Following the alternative setup in Remark~\ref{rmk: prelim alternative setup}, let $G = \{1, \sigma \}$ be the group with two elements that acts on a rank $3$ lattice $M_0 = \ZZ[e_1, e_2, e_3]$ by the map
\[
\sigma \mapsto
\begin{bmatrix}
-1 & -1 & -1 \\
0 & 0 & 1 \\
0 & 1 & 0
\end{bmatrix}.
\]
Let $P = \conv\{0,e_1,e_2,e_3\}$ and notice that $\sigma(P) = (-e_1) + P$, hence the above map defines a valid setup.
This setup is equivalent to the setup in Example~\ref{ex: simplex with a double reflection}, which can be seen as follows. By averaging the vertex $0 \in P$ over $G$, we obtain the $G$-invariant point $e = \frac 12 e_1$, verified by the fact that $e = \sigma(e) + e_1$. We define the lattice $M = \ZZ[e_0, e_1, e_2, e_3]$ and identify the affine sublattice of $M$ containing $P$ with the affine span of $\{e_0 + e_1, e_0 + e_2, e_0 + e_3\}$. In particular, the polytope $P$ is identified in $M_\RR$ as $\conv\{e_0, e_0+e_1, e_0+e_2, e_0+e_3 \}$. The action of $G$ on $P$ extends to an action of $G$ on $M$ given by
\[
\sigma \mapsto
\begin{bmatrix}
1 & 0 & 0 & 0 \\
1 & -1 & -1 & -1 \\
0 & 0 & 0 & 1 \\
0 & 0 & 1 & 0
\end{bmatrix}.
\]
The point $e$ in $M_\RR$ is identified with $e_0 + \frac 12 e_1$ which spans a $1$-dimensional $G$-invariant subspace. Observe that the vertices of $P \subseteq M_\RR$ are a basis for the lattice $M$. Rewriting the action of $G$ in terms of this basis identifies it with Example~\ref{ex: simplex with a double reflection}.
\end{example}

\smallskip

\noindent \textbf{Effectiveness of the equivariant $H^\ast$-series.} We say that the equivariant $H^\ast$-series $H^\ast[t] = \sum_{i \geq 0} H^\ast_i t^i \in R(G)[[t]]$ is \textit{effective} if each $H^\ast_i \in R(G)$ is the isomorphism class of a representation of $G$. In other words, $H^\ast_i$ is a non-negative sum of irreducible representations of $G$. One of the main problems in equivariant Ehrhart theory is to understand when $H^\ast[t]$ is effective.

\begin{conjecture}[{\cite[Conjecture~12.1]{stapledon2011equivariant}}]
Let $G$ be a finite group that acts on a lattice and $P$ a $G$-invariant lattice polytope. Let $Y$ be the toric variety with ample line bundle $L$ associated to $P$. Then the following are equivalent:
\begin{itemize}
    \item[$(1)$] $L$ admits a $G$-invariant section that defines a non-degenerate hypersurface of $Y$,
    \item[$(2)$] $H^\ast[t]$ is effective,
    \item[$(3)$] $H^\ast[t]$ is a polynomial.
\end{itemize}
\end{conjecture}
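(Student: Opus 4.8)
The plan is to split the three-way equivalence into the two implications that follow by routine arguments and the one implication that carries all the content, and to organise the attack around that asymmetry. The implication $(2)\Rightarrow(3)$ I would dispatch first: writing $H^\ast[t]=\sum_{i\ge 0}H^\ast_i t^i$ and taking characters at $1_G$ in the defining identity $\ehr_\rho(P,t)=H^\ast[t]/\det[I-t\rho]$, Remark~\ref{rmk: prelim setup evaluate at identity} yields the classical Ehrhart series of $P$ on the left and $\bigl(\sum_i(\dim H^\ast_i)\,t^i\bigr)/(1-t)^{n+1}$ on the right. Since $P$ is a lattice polytope, classical Ehrhart theory makes the numerator on the right an honest polynomial, the ordinary $h^\ast$-polynomial $h^\ast_P(t)$; hence $\dim H^\ast_i=0$ for all large $i$, and an effective class in $R(G)$ of dimension zero is the zero class, so $H^\ast[t]$ terminates. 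The lattice hypothesis enters precisely at this point: for merely rational $P$ the evaluation at $1_G$ need not be a polynomial, and the implication $(3)\Rightarrow(2)$ in fact fails in that generality, as Example~\ref{ex: cross polytope rational non effective} will show.

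For $(1)\Rightarrow(2)$ (which then also gives $(1)\Rightarrow(3)$ via the previous step) I would follow Stapledon~\cite{stapledon2011equivariant}: a $G$-invariant non-degenerate section of $L$ cuts out a $G$-stable hypersurface $X\subseteq Y$, and $H^\ast[t]$ can be identified with an alternating sum of the graded pieces of the weight filtration on the cohomology of $X$ and of the torus orbits it meets, the entire Danilov--Khovanskii computation being carried out $G$-equivariantly. Each such graded piece is an honest $G$-representation, so the combination that survives is a non-negative integer combination of irreducibles, i.e.\ each $H^\ast_i\in R(G)$ is effective; polynomiality then comes for free from finite-dimensionality of cohomology. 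The work here is bookkeeping --- propagating the $G$-action through the relevant spectral sequences and through the orbit stratification of $Y$ --- rather than anything conceptually new beyond the non-equivariant case.

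With $(1)\Rightarrow(2)\Rightarrow(3)$ in hand, the conjecture reduces to showing that a polynomial (equivalently, effective) equivariant $H^\ast$-series forces the existence of a $G$-invariant non-degenerate hypersurface section of $L$, and this is where I expect all the difficulty to sit. It splits into two sub-problems. The purely combinatorial one, $(3)\Rightarrow(2)$, is Conjecture~\ref{effectiveness_conjecture}; a plausible attack is to construct a $G$-equivariant half-open decomposition or shelling of $P$ refining the classical Stanley / Betke--McMullen decompositions, so that each piece contributes an effective representation and polynomiality is exactly what cancels the potentially negative boundary contributions coming from the fixed subpolytopes $\{P^g : g\in G\}$ --- but no such canonical equivariant decomposition is known, and the interaction of the fixed-point data forces delicate cancellations. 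The complementary sub-problem, $(2)\Rightarrow(1)$, is an equivariant Bertini-type statement: the $G$-invariant sections of $L$ form a linear subsystem of $|L|$, non-degeneracy is generic among \emph{all} sections, and one must show it persists on the invariant subsystem, which demands controlling that subsystem's base locus in terms of the faces of $P$. Absent a general argument for either sub-problem, the realistic course --- and the one taken in this paper --- is to settle the conjecture in explicit families by computing $\ehr_\rho(P,t)$ directly from the fixed polytopes $P^g$ (which are related to rational cross-polytopes, whose Ehrhart series are known), dividing by $\det[I-t\rho]$, and reading off whether the resulting $H^\ast[t]$ is a polynomial with effective coefficients.
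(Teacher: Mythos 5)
There is a fundamental problem here: the statement you are asked about is a \emph{conjecture} (Stapledon's Conjecture~12.1), and the paper does not prove it --- it only records what is known about it. Your first two steps are fine and agree with the known state of the art: $(2)\Rightarrow(3)$ does follow by evaluating at $1_G$ (an effective class of dimension zero is zero, so the series terminates once it matches the classical $h^\ast$-polynomial), and $(1)\Rightarrow(2)$ is Stapledon's equivariant Danilov--Khovanskii argument, which the paper simply cites. But your plan for the remaining content is aimed at a statement that is known to be \emph{false}: you propose to reduce the conjecture to showing that effectiveness (or polynomiality) of $H^\ast[t]$ forces a $G$-invariant non-degenerate section, i.e.\ $(2)\Rightarrow(1)$ and $(3)\Rightarrow(1)$, and to attack this by an equivariant Bertini-type genericity argument. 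As the paper points out, Santos and Stapledon constructed a counterexample (see \cite[Theorem~1.2]{elia2022techniques}) showing $(2)\nRightarrow(1)$ and $(3)\nRightarrow(1)$; the $G$-invariant linear subsystem of $|L|$ can fail to contain any non-degenerate section even when $H^\ast[t]$ is effective, so no genericity argument on that subsystem can succeed. The three-way equivalence as stated is therefore not provable, and the only implication that remains open is $(3)\Rightarrow(2)$, which is exactly Conjecture~\ref{effectiveness_conjecture} (stated for lattice polytopes).

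Your sketch for that surviving implication $(3)\Rightarrow(2)$ --- a hoped-for equivariant half-open decomposition refining Stanley/Betke--McMullen --- is, as you yourself concede, not an argument but a wish list; the paper makes no attempt at a general proof and instead verifies the implication case by case (Theorems~\ref{thm: prime cycles}, \ref{thm: symm edge poly order 2 group}, \ref{thm: cross polytope coord reflection}) by computing $\ehr_\rho(P,t)$ from the fixed polytopes $P^g$ and reading off $H^\ast[t]$, with Example~\ref{ex: cross polytope rational non effective} showing the lattice hypothesis cannot be dropped (a point you correctly flag). So the honest summary is: parts one and two of your proposal reprove known implications, part three targets a refuted implication, and the genuinely open part is left, both by you and necessarily by the paper, unproved.
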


It is well known that $(1) \Rightarrow (2) \Rightarrow (3)$, see \cite{stapledon2011equivariant}, and a counterexample has been constructed by Santos and Stapledon \cite[Theorem~1.2]{elia2022techniques} showing that $(2) \nRightarrow (1)$ and $(3) \nRightarrow (1)$. It is currently open whether $(3) \Rightarrow (2)$.

\section{Symmetric edge polytopes of cycle graphs}\label{sec: symmetric edge polytopes}
In this section we consider symmetric edge polytopes coming from cycle graphs and show that Conjecture~\ref{effectiveness_conjecture} holds for the action of the dihedral group (Theorem~\ref{thm: prime cycles}) if the cycle graphs have prime order, and for the action of the two element subgroups of the dihedral group (Theorem~\ref{thm: symm edge poly order 2 group}) for cycle graphs of any order.
We begin by fixing the setup for this section.
Then, we consider the fixed polytopes of certain  symmetric edge polytopes.
Lastly we conclude the section with the statements and proofs of the main theorems.

Let $\Gamma=(V,E)$ be an undirected graph and $\ZZ^{\abs{V}}$ a lattice whose basis elements $e_v$ are associated to the vertices $v\in V$.
Then the \emph{symmetric edge polytope} $P_\Gamma\subset \RR^{\abs{V}}$ associated to $\Gamma$ is defined as follows:
\[
    P_\Gamma = \conv\left\{\pm(e_v - e_w) \colon \{v,w\}\in E \right\}.
\]
Throughout this section, we shall consider the automorphism group of $\Gamma$, denoted $\aut(\Gamma)$.
One sees that $\aut(\Gamma)$ naturally induces a permutation representation $\rho_\Gamma$ on $\RR^{\abs{V}}$, which leaves $P_\Gamma$ invariant.
We focus on the case when $\Gamma$ is the cycle graph $C_d$ for some integer $d\geq 3$.
In this case, $\aut(C_d)\cong D_{2d}=\scalar{r,s \mid s^2=r^d=(sr)^2=1}$ is the dihedral group of order $2d$.

We identify $D_{2d}$ with the automorphism group of $C_d$.
We fix the generator $s \in D_{2d}$, in the presentation of the group, to be a reflection that fixes the fewest number of vertices of $C_d$. Let $\rho_d:=\rho_{C_d} : D_{2d} \rightarrow \GL(\RR^{d})$ denote the associated permutation representation.
From now on, we label the vertices of $C_d$ with $\set{v_0,\ldots, v_{\ceil{(d-2)/2}},w_0,\ldots, w_{\ceil{(d-2)/2}}}$, where $w_0=v_0$ if $d$ is odd, so that: $(v_0,v_1,\ldots,v_{\ceil{(d-2)/2}})$ and $(w_0,w_1,\ldots,w_{\ceil{(d-2)/2}})$ are distinct paths in $C_d$; for each $0 \le i \le \ceil{(d-2)/2}$ the $s$-orbits are $\{v_i, w_i \}$; if $d$ is odd, then $v_0 = w_0$ is the unique fixed vertex of $s$; if $d$ is even, then $v_0$ and $w_0$ are neighbours; and $r$ is the rotation that maps $w_0$ to $w_1$ (see Figure~\ref{labeling}).
\begin{figure}[h]
    \centering
    \begin{tikzpicture}
        \node[shape=circle,draw=black] (v0) at (0,0) {$\quad v_0\quad$};
        \node[shape=circle,draw=black] (w0) at (0,3) {$\quad w_0\quad$};
        \node[shape=circle,draw=black] (w1) at (2.5,4) {$\quad w_1\quad$};
        \node[shape=circle,draw=black] (v1) at (2.5,-1) {$\quad v_1\quad$};
        \node[shape=circle,draw=black] (wd) at (5,3) {$w_{(d-2)/2}$};
        \node[shape=circle,draw=black] (vd) at (5,0) {$v_{(d-2)/2}$};

        \path [-](w0) edge node[left] {} (w1);
        \path [-](v0) edge node[left] {} (v1);
        \path [dotted](w1) edge node[left] {} (wd);
        \path [dotted](v1) edge node[left] {} (vd);
        \path [-](vd) edge node[left] {} (wd);
        \path [-](w0) edge node[left] {} (v0);
        
        \node[] (s1) at (5.75,1.75) {$s$};
        \path[dashed](-1,1.5) edge node[left] {} (6,1.5);
        
        \node[] (s2) at (14.25,1.75) {$s$};
        \path[dashed](9.35,1.5) edge node[left] {} (14.5,1.5);
        
        \node[] (r1) at (1.5,3) {$r$};
        \draw[dashed, ->] (0.8,2.25)  arc[start angle=140, end angle=90, radius=2cm];
        
        \node[] (r2) at (10,2.6) {$r$};
        \draw[dashed, ->] (9.8,1.9)  arc[start angle=160, end angle=110, radius=1.5cm];

        \node[shape=circle,draw=black] (v0o) at (8.5,1.5) {$v_0=w_0$};
        \node[shape=circle,draw=black] (w1o) at (11,4) {$\quad w_1\quad$};
        \node[shape=circle,draw=black] (v1o) at (11,-1) {$\quad v_1\quad$};
        \node[shape=circle,draw=black] (wdo) at (13.5,3) {$w_{(d-1)/2}$};
        \node[shape=circle,draw=black] (vdo) at (13.5,0) {$v_{(d-1)/2}$};

        \path [-](v0o) edge node[left] {} (w1o);
        \path [-](v0o) edge node[left] {} (v1o);
        \path [dotted](w1o) edge node[left] {} (wdo);
        \path [dotted](v1o) edge node[left] {} (vdo);
        \path [-](vdo) edge node[left] {} (wdo);
    \end{tikzpicture}
    \caption{The vertex labelings for even (left) and odd (right) cycle graphs and the action of the generators of the dihedral group.}\label{labeling}
\end{figure}
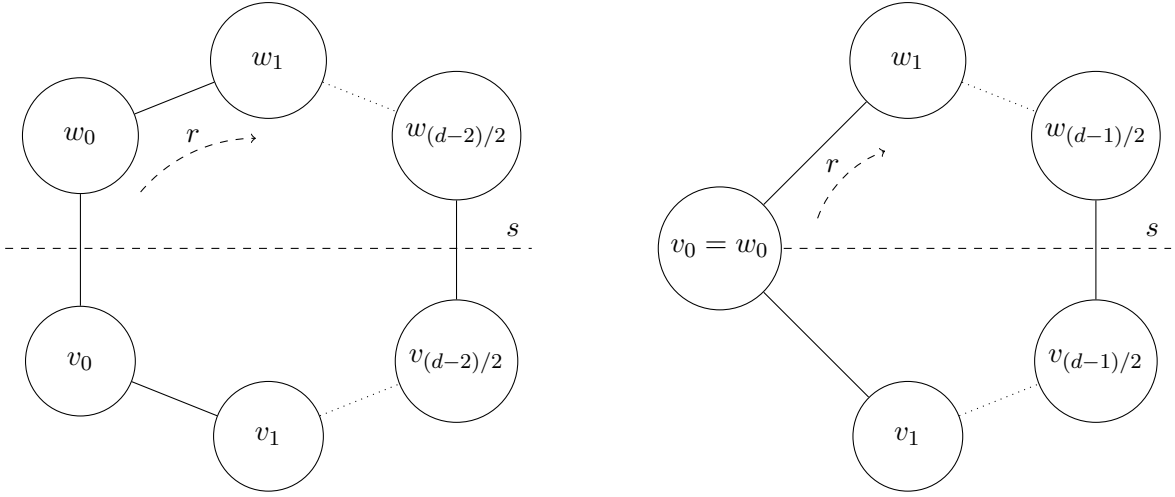

Studying the equivariant Ehrhart theory of $P_d:=P_{C_d}$ under the action of $D_{2d}$ involves understanding the Ehrhart series of the individual sub-polytopes $P_d^g$ fixed by the individual elements $g \in D_{2d}$.
Let us begin with the trivial element $1 \in D_{2d}$.

\begin{proposition}[{\cite[Theorem~3.3]{ohsugi2012smooth}}]\label{prop: ohsugi}
The Ehrhart series of $P_d$ is given by
\[
\ehr(P_d, t) = \frac{h_0^{(d)} + h_1^{(d)} t + \cdots + h_{d-1}^{(d)} t^{d-1}}{(1-t)^d}
\]
where: $h_0^{(d)}=1$; for $1\leq j \leq \floor{\frac{d}{2}}$, we have
\[
h_j^{(d)} = (-1)^j \sum_{i=0}^j (-2)^i \binom{d}{i}\binom{d-1-i}{j-i}
= \begin{cases}
2^{d-1} & \text{if $d$ is odd and $j={\frac{d-1}{2}}$}, \\
h_{j-1}^{(d-1)} + h_j^{(d-1)} & \text{otherwise};
\end{cases}
\]
and for each $\frac{d}{2}<j< d$, the coefficients are $h_j^{(d)} = h_{d-1-j}^{(d)}$.
\end{proposition}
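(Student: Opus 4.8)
The plan is to compute the Ehrhart series of $P_d := P_{C_d}$ directly from the gauge (Minkowski functional) of $P_d$ at lattice points, to reorganise the resulting sum so that the binomial formula and the Pascal-type recursion emerge, and to obtain the palindromic symmetry $h_j^{(d)} = h_{d-1-j}^{(d)}$ from the fact that $P_d$ is reflexive. First I would fix coordinates: the polytope $P_d$ spans the hyperplane $H = \{x \in \RR^d : \sum_i x_i = 0\}$, its vertices are the roots $\pm u_k$ with $u_k = e_k - e_{k+1}$ (indices modulo $d$), and $u_1,\dots,u_{d-1}$ is a $\ZZ$-basis of the lattice $\Lambda = \ZZ^d \cap H$ with $u_d = -(u_1+\dots+u_{d-1})$. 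For $x = \sum_{i=1}^{d-1} c_i u_i \in \Lambda$, the affine line of real representations $x = \sum_{i=1}^d t_i u_i$ is $(c_1,\dots,c_{d-1},0) + \RR\cdot(1,\dots,1)$, so the gauge of $P_d$ is
\[
\lVert x \rVert_{P_d} \;=\; \min_{s \in \RR}\Bigl(\lvert s\rvert + \sum_{i=1}^{d-1}\lvert c_i + s\rvert\Bigr) \;=\; \min_{\tau \in \RR}\sum_{k=1}^{d}\lvert y_{(k)} - \tau\rvert \;=\; \sum_{k=1}^{d-1}\min(k,\,d-k)\,\bigl(y_{(k+1)} - y_{(k)}\bigr),
\]
where $y_{(1)}\le\cdots\le y_{(d)}$ is the nondecreasing rearrangement of $(0,c_1,\dots,c_{d-1})$ and the last equality is the standard formula for the $\ell^1$-median deviation. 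Since each $\lVert x\rVert_{P_d}$ is a non-negative integer, $\ehr(P_d,t) = \frac{1}{1-t}\sum_{x\in\Lambda} t^{\lVert x\rVert_{P_d}}$, so the whole problem reduces to evaluating $G_d(t) := \sum_{c\in\ZZ^{d-1}} t^{\lVert x\rVert_{P_d}}$ and reading off $h^{(d)}(t) = (1-t)^{d-1}G_d(t)$.

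Next I would compute $G_d$ by partitioning $\ZZ^{d-1}$ according to the \emph{order type} of the multiset $\{0,c_1,\dots,c_{d-1}\}$, i.e.\ the ordered set partition of the $d$ ``slots'' (one of which is pinned to $0$) recording which coordinates coincide and in which order. For a fixed type with block sizes $(m_1,\dots,m_r)$ the gauge becomes the linear form $\sum_{\ell=1}^{r-1} \min(M_\ell,\,d-M_\ell)\,\delta_\ell$ in the positive gaps $\delta_\ell \ge 1$ between consecutive distinct values, where $M_\ell = m_1+\dots+m_\ell$; summing the geometric series over the $\delta_\ell$ and then over the slot-to-block assignments --- whose number telescopes to a single multinomial coefficient --- gives
\[
G_d(t) \;=\; \sum_{(m_1,\dots,m_r)\,\vDash\, d} \binom{d}{m_1,\dots,m_r}\;\prod_{\ell=1}^{r-1} \frac{t^{\,\min(M_\ell,\,d-M_\ell)}}{1 - t^{\,\min(M_\ell,\,d-M_\ell)}},
\]
a finite sum over compositions of $d$. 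Clearing denominators and collecting the coefficient of $t^j$ for $1 \le j \le \lfloor d/2 \rfloor$ should recover the stated closed form: $\binom{d}{i}$ records the $i$ slots that move off the median, the factor $(-2)^i$ is the inclusion--exclusion correction for the two sides on which they may move, and $\binom{d-1-i}{j-i}$ distributes the remaining dilation budget $j-i$. Comparing the order-type expansions of $G_d$ and $G_{d-1}$ through deletion of the slot labelled $d$ (which is either alone in its block or not) then yields, after multiplication by $(1-t)^{d-1}$, the identity $h^{(d)}(t) = (1+t)\,h^{(d-1)}(t)$ up to a correction concentrated at the single middle coefficient when $d$ is odd; that coefficient $h_{(d-1)/2}^{(d)}$ is the exceptional case, and a separate direct evaluation of the alternating sum for it should give the value $2^{d-1}$.

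For the symmetry $h_j^{(d)} = h_{d-1-j}^{(d)}$ in the range $\tfrac{d}{2} < j < d$ I would use that $P_d$ is reflexive --- indeed terminal, as is the symmetric edge polytope of any connected graph --- so by Hibi's palindromicity theorem its $h^*$-polynomial is palindromic of degree exactly $\dim P_d = d-1$; equivalently this follows from Ehrhart--Macdonald reciprocity applied to $P_d$ and its interior. Combined with $h_0^{(d)} = L_{P_d}(0) = 1$, the first-half values computed above, and the elementary binomial identity that the closed form solves the recursion with these initial data, this completes the argument. The step I expect to be the main obstacle is the recursion: precisely tracking how the weights $\min(M_\ell,\,d-M_\ell)$ transform under deletion of the slot $d$ is delicate, and it is exactly there that the parity of $d$ intervenes to produce $2^{d-1}$. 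A more economical alternative avoids the explicit count: every symmetric edge polytope admits a regular unimodular triangulation (obtained from a squarefree initial ideal of its toric ideal), and for such a triangulation $\Delta_d$ of $P_d$ one has $h^{(d)}(t) = h(\Delta_d;t)$, computable from a shelling of $\Delta_d$; choosing $\Delta_d$ compatibly with the edge contraction $C_d \mapsto C_{d-1}$, so that it restricts to $\Delta_{d-1}$, then gives the recursion through the way $\Delta_d$ is assembled from $\Delta_{d-1}$.
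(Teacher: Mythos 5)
Your framework is sound as far as it goes: the gauge of $P_d$ at a lattice point, the identity $\ehr(P_d,t)=\tfrac{1}{1-t}\sum_{x}t^{\lVert x\rVert_{P_d}}$, and the composition-sum expression for $G_d(t)$ (the multinomial count obtained by summing over which block contains the pinned zero is correct) all check out in small cases; for $d=3$ it does return $1+4t+t^2$. But bear in mind that the paper gives no proof of this proposition at all --- it is quoted verbatim from the cited reference --- so the entire content of a proof is exactly the two steps you leave as ``should recover'' and ``should give'': extracting the alternating-sum closed form from $G_d(t)$, and establishing the deletion recursion together with the middle value $2^{d-1}$ for odd $d$. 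Neither is carried out. The sentence interpreting $\binom{d}{i}$, $(-2)^i$ and $\binom{d-1-i}{j-i}$ is a mnemonic, not an argument; you yourself flag the recursion (tracking how the weights $\min(M_\ell,d-M_\ell)$ behave under deletion of a slot) as the main obstacle; and the fallback via a regular unimodular triangulation compatible with edge contraction is only named, not constructed. As written this is a plausible plan, not a proof.

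A concrete sign that the glossed step is genuinely delicate: if you actually perform the coefficient extraction, you will find that the closed form cannot be recovered on the whole range $1\le j\le\lfloor d/2\rfloor$. For $d=4$, $j=2$ the alternating sum evaluates to $3-16+24=11$, whereas $P_{C_4}$, which in the basis $e_i-e_{i+1}$ is $\conv\{\pm e_1,\pm e_2,\pm e_3,\pm(1,1,1)\}$, has $L_{P_4}(m)=m^3+(m+1)^3$ and hence $h^\ast$-polynomial $1+5t+5t^2+t^3$; the correct value, and the one produced by the recursion branch $h_1^{(3)}+h_2^{(3)}=5$, is $5$. So the alternating-sum expression is only valid for $j\le\lfloor(d-1)/2\rfloor$, the even-$d$ index $j=d/2$ being governed by palindromicity instead, and an honest execution of your plan has to detect and accommodate precisely this boundary phenomenon --- further evidence that the extraction cannot be waved through. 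Finally, the reflexivity/terminality of symmetric edge polytopes, Hibi's palindromicity theorem, and the existence of squarefree initial ideals are themselves imported theorems; relying on them is legitimate, but then the most economical correct route is the one the paper takes, namely a citation. For a self-contained argument you still owe: (i) a proof of palindromicity (or of reflexivity of $P_{C_d}$), (ii) the actual coefficient extraction on the corrected range, and (iii) a proof of the recursion including the odd-$d$ middle correction $2^{d-1}$.
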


For odd cycle graphs $C_{2\ell+1}$, all reflections in $D_{4\ell+2}$ are conjugate and so the corresponding fixed polytopes are unimodularly equivalent. Hence, it suffices to compute the fixed polytope for a single reflection, say $s\in D_{4\ell+2}$.

\begin{proposition}\label{prop: odd_cycles}
Let $\ell\geq 1$ be an integer.
The fixed sub-polytopes $P_{2\ell+1}^s$ and $P_{2\ell+2}^s$ are unimodularly equivalent to the cross-polytope of dimension $\ell$ dilated by the factor $\frac{1}{2}$ and their Ehrhart series are given by
\[
\ehr(P_{2\ell+1}^s, t) = \ehr(P_{2\ell+2}^s, t) = \frac{(1+t^2)^\ell}{(1-t)(1-t^2)^\ell}.
\]
\end{proposition}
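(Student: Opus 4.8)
The plan is to compute each fixed polytope explicitly via the averaging projection and then recognise it, with respect to the correct lattice, as a dilated cross-polytope; the Ehrhart series then drops out of the classical formula $\ehr(\Diamond_\ell, t) = (1+t)^\ell/(1-t)^{\ell+1}$ for the standard $\ell$-dimensional cross-polytope $\Diamond_\ell = \conv\{\pm e_1, \ldots, \pm e_\ell\}$. Let $\pi \colon \RR^{\abs{V}} \to (\RR^{\abs{V}})^s$, $\pi(x) = \tfrac12\big(x + \rho_d(s)x\big)$, be the averaging (Reynolds) projection onto the fixed subspace. Since $P_d$ is convex and $s$-invariant, one has $P_d^s = P_d \cap (\RR^{\abs{V}})^s = \pi(P_d) = \conv\{\pi(\pm(e_v - e_w)) : \{v,w\} \in E\}$, and the lattice with respect to which $P_d^s$ is a rational polytope is $\Lambda := \ZZ^{\abs{V}} \cap (\RR^{\abs{V}})^s$, which has the $\ZZ$-basis $f_i := e_{v_i} + e_{w_i}$ for $0 \le i \le \ell$, where $f_0 = e_{v_0}$ in the odd case.

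First I would evaluate $\pi$ on each edge vector using the vertex labelling of Figure~\ref{labeling}. The edges closing up the cycle, namely $\{v_\ell, w_\ell\}$ in both cases together with $\{v_0, w_0\}$ in the even case, map to $0$; an edge $\{v_i, v_{i+1}\}$ and its mirror $\{w_i, w_{i+1}\}$ both map to $\tfrac12(f_i - f_{i+1})$; and, only in the odd case, the edge $\{v_0, v_1\}$ (equivalently $\{w_0, w_1\}$, since $v_0 = w_0$) maps to $f_0 - \tfrac12 f_1$. Consequently
\[
P_{2\ell+2}^s = \conv\{\pm\tfrac12(f_i - f_{i+1}) : 0 \le i \le \ell - 1\}, \qquad
P_{2\ell+1}^s = \conv\{0,\ \pm(f_0 - \tfrac12 f_1),\ \pm\tfrac12(f_1 - f_2),\ \ldots,\ \pm\tfrac12(f_{\ell-1} - f_\ell)\}.
\]
In each case the $\ell$ displayed direction vectors are linearly independent, so $0$ lies in the relative interior and may be discarded, and the affine span of $P_d^s$ is the rational hyperplane through the origin inside $(\RR^{\abs{V}})^s$ cut out by $\sum_{i=0}^\ell a_i = 0$ (even case) or $a_0 + 2\sum_{i=1}^\ell a_i = 0$ (odd case) on the coordinates of $\sum a_i f_i$.

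Next I would pin down the lattice inside this hyperplane and exhibit the unimodular equivalence. In the even case, $\Lambda$ meets the hyperplane in a lattice with $\ZZ$-basis $f_0 - f_1, \ldots, f_{\ell-1} - f_\ell$, and in those coordinates $P_{2\ell+2}^s$ is literally $\tfrac12\Diamond_\ell$. In the odd case, $\Lambda$ meets the hyperplane in a lattice with $\ZZ$-basis $b_i := f_i - 2f_0$ for $1 \le i \le \ell$; substituting $f_0 - \tfrac12 f_1 = -\tfrac12 b_1$ and $\tfrac12(f_i - f_{i+1}) = \tfrac12(b_i - b_{i+1})$ gives $P_{2\ell+1}^s = \conv\{\pm\tfrac12 b_1,\ \pm\tfrac12(b_1 - b_2),\ \ldots,\ \pm\tfrac12(b_{\ell-1} - b_\ell)\}$, and since $\{b_1,\ b_1 - b_2,\ \ldots,\ b_{\ell-1} - b_\ell\}$ is again a $\ZZ$-basis of the same lattice, $P_{2\ell+1}^s$ is unimodularly equivalent to $\tfrac12\Diamond_\ell$. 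The only step that needs genuine care is this odd case: the fixed vertex $v_0 = w_0$ breaks the mirror symmetry, so $f_0$ behaves differently from the other $f_i$ and one is forced into the change of basis $b_i = f_i - 2f_0$; the even case is immediate once $\pi$ has been computed.

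Finally, I would compute the Ehrhart series of $\tfrac12\Diamond_\ell$. For $x \in \ZZ^\ell$ the value $\|x\|_1$ is a non-negative integer, so $m\cdot\tfrac12\Diamond_\ell$ and $\lfloor m/2\rfloor\cdot\Diamond_\ell$ contain exactly the same lattice points; that is, the Ehrhart function of $\tfrac12\Diamond_\ell$ is $m \mapsto L_{\Diamond_\ell}(\lfloor m/2\rfloor)$. Summing over $m$,
\[
\ehr(\tfrac12\Diamond_\ell, t) = (1+t)\sum_{k \ge 0} L_{\Diamond_\ell}(k)\, t^{2k} = (1+t)\,\ehr(\Diamond_\ell, t^2) = (1+t)\cdot\frac{(1+t^2)^\ell}{(1-t^2)^{\ell+1}} = \frac{(1+t^2)^\ell}{(1-t)(1-t^2)^\ell},
\]
which is the asserted series for both $P_{2\ell+1}^s$ and $P_{2\ell+2}^s$, completing the proof.
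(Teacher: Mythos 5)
Your proof is correct, and its geometric core follows the same route as the paper: you compute $P_d^s$ from the $s$-orbits of the vertices of $P_d$ (the paper invokes \cite[Lemma~5.4]{stapledon2011equivariant}, you re-derive the same fact via the averaging projection $\pi$, which is fine), you discard the orbits of the closing edges since they average to the origin, and you identify the result as the $\ell$-dimensional cross-polytope dilated by $\frac12$. Two things are genuinely different. First, the lattice identification: the paper simply asserts that the orbit averages $e_{v_i}+e_{w_i}-e_{v_{i+1}}-e_{w_{i+1}}$ form a lattice basis for the relevant fixed lattice, adding only that it does not matter whether $v_0=w_0$; you instead make the odd case explicit by passing to the basis $b_i=f_i-2f_0$ of $\Lambda$ intersected with the span and performing the unimodular change of basis to $\{b_1,\,b_1-b_2,\ldots,b_{\ell-1}-b_\ell\}$ --- this is exactly the step that needs care, and your computation is correct (both your hyperplane descriptions and the claim that these sets are $\ZZ$-bases check out). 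Second, the Ehrhart series: the paper views the half cross-polytope as an $\ell$-fold free sum of segments $[-\frac12,\frac12]$ and applies \cite[Theorem~1.4]{BJM2013} by induction on $\ell$, whereas you use the elementary observation that for the standard lattice cross-polytope $Q=\conv\{\pm e_1,\dots,\pm e_\ell\}$ one has $L_{\frac12 Q}(m)=L_{Q}(\lfloor m/2\rfloor)$, hence $\ehr(\tfrac12 Q,t)=(1+t)\ehr(Q,t^2)$, combined with the classical series $\ehr(Q,t)=(1+t)^\ell/(1-t)^{\ell+1}$. Your version is more self-contained and makes the lattice bookkeeping in the odd case fully explicit; the paper's is shorter given that it already relies on the free-sum theorem elsewhere. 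Both yield the asserted series.
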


\begin{proof}
We start by giving a full description of the vertices of $P_{2\ell+1}^s$ and $P_{2\ell+2}^s$.
Each $s$-orbit is given by $\{v_i, w_i \}$ for each $0 \le i \le \ell$.
Note, the $s$-orbit that is an edge of $C_{2\ell+1}$ is $\{v_\ell, w_\ell \} \in E$, while those of $C_{2\ell+2}$ are $\{v_0,w_0\}$ and $\{v_\ell,w_\ell\}$.
The $s$-orbits of the vertices of $P_{2\ell+1}$ and $P_{2\ell+2}$ are hence given by $\set{\pm (e_{w_i}- e_{w_{i+1}}), \pm (e_{v_i}- e_{v_{i+1}})}$ as well as $\set{e_{w_\ell} - e_{v_\ell}, e_{v_\ell} - e_{w_\ell}}$.
In the case of $P_{2\ell+2}$, we have the additional vertex $\set{e_{w_0} - e_{v_0}, e_{v_0} - e_{w_0}}$.
By Lemma~5.4 in \cite{stapledon2011equivariant}, $P_{2\ell+1}^s$ (resp. $P_{2\ell+2}$) is given by the convex hull of points of the form $\frac{\sum_{p\in I}p}{\abs{I}}$ where $I$ is an $s$-orbit of the vertices of $P_{2\ell+1}^s$ (resp. $P_{2\ell+2}$).
The orbits $\set{e_{w_0} - e_{v_0}, e_{v_0} - e_{w_0}}$ and $\set{e_{w_\ell} - e_{v_\ell}, e_{v_\ell} - e_{w_\ell}}$ correspond to the origin and do not contribute to the description of $P_{2\ell+1}^s$ (resp. $P_{2\ell+2}$).
The remaining orbits yield
\[
P_{2\ell+1}^s = P_{2\ell+2}^s = \conv\left\{\pm\frac{1}{2}(e_{v_i}+e_{w_i}-e_{v_{i+1}}-e_{w_{i+1}}) : 0 \leq i \leq \ell-1\right\}.
\]

One can see that the points $\{e_{v_i}+e_{w_i}-e_{v_{i+1}}-e_{w_{i+1}}\}$ form a lattice basis for the fixed subspace (note that it does not matter whether $v_0$ and $w_0$ are identical or not), and with respect to that basis, $P_{2\ell+1}^s$ is unimodularly equivalent to the cross-polytope of dimension $\ell$ dilated by the factor $\frac{1}{2}.$
By \cite[Theorem~1.4]{BJM2013} and the fact that the Ehrhart series of the interval $[-\frac{1}{2},\frac{1}{2}]$ is $\frac{1+t^2}{(1-t)(1-t^2)}$, the result follows by induction on $\ell$.
\end{proof}

\begin{remark}
For even cycle graphs $C_{2\ell+2}$, there is another type of reflection: one that fixes two antipodal vertices.
For such a reflection $sr \in D_{4\ell+4}$, the fixed sub-polytope $P_{2\ell+2}^{sr}$ cannot be studied using the same method as in Proposition~\ref{prop: odd_cycles}.
The one-element $sr$-orbits are $\set{v_0}$ and $\set{w_\ell}$ and the other orbits are $\set{v_i, w_{i-1}}$.
By a similar argument as above, the vertices of the sub-polytope $P_{2\ell+2}^{sr}$ are of the form
\begin{align*}
&\pm\frac{1}{2}(e_{v_i}+e_{w_{i-1}}-e_{v_{i+1}}-e_{w_i}) \;\;\text{for}\;\;i=1,\ldots,\ell-1, \\
&\pm\frac{1}{2}(e_{v_1}+e_{w_0}-2e_{v_0}) \;\text{and}\; \pm\frac{1}{2}(e_{v_\ell}+e_{w_{\ell-1}}-2e_{w_\ell}).
\end{align*}
For $\ell=1$, this is unimodularly equivalent to a dilated square containing the origin in its interior.
For $\ell>1$, one can cut through the points $\pm(e_{v_1}+e_{w_0}-2e_{v_0})$ and $\pm(e_{v_1}+e_{w_0})$, which yields a subpolytope of $2P_{2\ell+2}^{sr}$ containing the origin and four of its vertices.
Again, this is unimodularly equivalent to a square containing the origin.
Hence, $P_{2\ell+2}^{sr}$ is not unimodularly equivalent to a dilated cross-polytope.
\end{remark}

We have computed the invariant polytopes of the symmetric edge polytope fixed by reflections of $D_{2d}$.
The remaining conjugacy classes are the rotations.
For odd $d$, the irreducible characters of $D_{2d}$ are determined by the following table:
\[
\begin{tabular}{c|ccc}
    & $1$ & $r^k$ & $sr^k$ \\
    \hline
    $\psi_1$ & $1$ & $1$ & $1$ \\
    $\psi_2$ & $1$ & $1$ & $-1$ \\
    $\chi_j$ & $2$ & $2\cos{\frac{2jk\pi}{d}}$ & $0$
\end{tabular}.
\]
where $j$ ranges from $1$ to $\frac{d-1}{2}$ and $k$ ranges from $1$ to $d$.

In general, the fixed polytope $P_{d}^{r^k}$ with respect to a rotation $r^k$ is very difficult to compute directly.
Not only does the description of the vertices of $P_{d}^{r^k}$ depend on the cycle decomposition of the permutation action of $r^k$ on the basis vectors of $\RR^{|V|}$, but also on the adjacency of these vertices in the cycle graph.

However, the rotation $r^k\in D_{2d}$, where $k$ and $d$ are coprime, does not fix any vertex of $C_{d}$. Therefore, the induced action on $P_{d}$ fixes only the origin, whose Ehrhart series is simply a geometric series $\ehr(\{0\}, t) = 1 + t + t^2 + \dots = \frac{1}{1-t}$.
This yields the following result when $d$ is prime.

\begin{theorem}\label{thm: prime cycles}
Let $p\geq 3$ be a prime number.
The $H^\ast$-series $H^\ast_{(p)}$ of $P_{p}$ with respect to the action of the dihedral group $D_{2p}$ is a polynomial of degree $p-1$ and its coefficients $H^\ast_{(p),j}$ are given by
\[
H^\ast_{(p),j} = \frac{1}{2p}
\begin{cases}
(h_j^{(p)}-1+p(g_j^{(p)}+1)) \psi_1 + (h_j^{(p)}-1-p(g_j^{(p)}-1))\psi_2 + (2h_j^{(p)}-2)\chi & \text{if $j$ is even,} \\
(p+h_j^{(p)}-1)\psi_1 + (p+h_j^{(p)}-1)\psi_2 + (2h_j^{(p)}-2) \chi & \text{if $j$ is odd.}
\end{cases}
\]
where $h_j^{(p)}$ follows the notation from Proposition \ref{prop: ohsugi}, $g_j^{(p)}:=\binom{(p-1)/2}{j/2}$, and $\chi=\sum_{j}\chi_j$. In particular, $H^\ast_{(p)}$ is effective.
\end{theorem}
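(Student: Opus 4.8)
The plan is to determine $H^\ast_{(p)}$ by evaluating the equivariant $H^\ast$-series at a representative of each conjugacy class of $D_{2p}$, using the identity $H^\ast[t](g)=\ehr(P_p^g,t)\cdot\det[I-t\,\rho_p(g)]$ (where $\rho_p$ is the permutation representation of $C_p$), and then decomposing the coefficients $H^\ast_{(p),j}$ into irreducible characters. Since $p$ is prime, $D_{2p}$ has only three kinds of elements, and I would treat each in turn. The identity acts trivially, so $\det[I-t\,\rho_p(1)]=(1-t)^p$, and Proposition~\ref{prop: ohsugi} gives $H^\ast[t](1)=\sum_{j=0}^{p-1}h_j^{(p)}t^j$. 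A non-trivial rotation $r^k$ ($1\le k\le p-1$) has $\gcd(k,p)=1$, hence acts as a single $p$-cycle on the vertices of $C_p$; thus $\det[I-t\,\rho_p(r^k)]=1-t^p$, and since $P_p^{r^k}=\{0\}$ (noted just before the theorem) we get $H^\ast[t](r^k)=\tfrac{1-t^p}{1-t}=\sum_{j=0}^{p-1}t^j$. A reflection $sr^k$ acts as one fixed point together with $(p-1)/2$ transpositions, so $\det[I-t\,\rho_p(sr^k)]=(1-t)(1-t^2)^{(p-1)/2}$, and Proposition~\ref{prop: odd_cycles} with $\ell=(p-1)/2$ gives $H^\ast[t](sr^k)=(1+t^2)^{(p-1)/2}$.

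Each of these three power series is a polynomial of degree at most $p-1$, and since a virtual $D_{2p}$-representation is determined by its character values on the conjugacy classes, $H^\ast_{(p)}$ is a polynomial of degree at most $p-1$; its degree is exactly $p-1$ since the coefficient of $t^{p-1}$ has value $h_{p-1}^{(p)}=h_0^{(p)}=1\ne0$ at the identity. Hence $H^\ast_{(p),j}\in R(D_{2p})$ is the virtual character with value $h_j^{(p)}$ at the identity, $1$ at every rotation, and $g_j^{(p)}=\binom{(p-1)/2}{j/2}$ at every reflection for $j$ even (and $0$ at every reflection for $j$ odd). Decomposing $H^\ast_{(p),j}$ by taking inner products with $\psi_1$, $\psi_2$, and $\chi_l$, and using $\sum_{k=1}^{p-1}\cos\tfrac{2lk\pi}{p}=-1$, one obtains $\langle H^\ast_{(p),j},\chi_l\rangle=\tfrac{h_j^{(p)}-1}{p}$, a value independent of $l$ (which is why the answer can be phrased through $\chi=\sum_l\chi_l$), and the coefficients of $\psi_1$ and $\psi_2$ come out exactly as stated; integrality of all three is automatic since $H^\ast_{(p),j}$ lies in $R(D_{2p})$.

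What remains, and what I expect to be the crux, is effectiveness, i.e.\ non-negativity of the three coefficients. The key preliminary step is to simplify Proposition~\ref{prop: ohsugi}: the formal identity $\sum_j\bigl[(-1)^j\sum_k(-2)^k\binom{d}{k}\binom{d-1-k}{j-k}\bigr]t^j=(1-t)^{d-1}\bigl(1+\tfrac{2t}{1-t}\bigr)^{d}=\tfrac{(1+t)^d}{1-t}$, combined with the symmetry $h_j^{(d)}=h_{d-1-j}^{(d)}$, shows that
\[
h_j^{(p)}=\sum_{k=0}^{\min(j,\,p-1-j)}\binom{p}{k}\qquad\text{for }0\le j\le p-1 .
\]
In particular $h_j^{(p)}\ge\binom{p}{0}=1$, which makes the $\chi$-coefficient $\tfrac{h_j^{(p)}-1}{p}$ and the $\psi_1$-coefficient non-negative at once. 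The only genuinely non-trivial inequality is non-negativity of the $\psi_2$-coefficient for $j$ even, i.e.\ $h_j^{(p)}\ge p\bigl(g_j^{(p)}-1\bigr)+1$. Writing $j=2i$ and $i'=\min\{i,\tfrac{p-1}{2}-i\}$, the displayed formula gives $h_j^{(p)}\ge\binom{p}{2i'}$ and $g_j^{(p)}=\binom{(p-1)/2}{i'}$, so it suffices to prove $\binom{p}{2q}\ge p\binom{(p-1)/2}{q}$ for $1\le q\le\lfloor(p-1)/4\rfloor$ (the case $q=0$, i.e.\ $j\in\{0,p-1\}$, is the tight identity $1\ge1$). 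Putting $p=2\ell+1$, this is equivalent to the self-contained estimate
\[
\frac{\binom{2\ell}{2q}}{\binom{\ell}{q}}=\prod_{i=0}^{q-1}\frac{2\ell-2i-1}{2i+1}\ \ge\ 2\ell-2q+1 ,
\]
which holds because the factor at $i=0$ already equals $2\ell-1\ge 2\ell-2q+1$ while every later factor is $\ge1$ in this range. This shows each $H^\ast_{(p),j}$ is a non-negative integer combination of irreducibles, so $H^\ast_{(p)}$ is effective.
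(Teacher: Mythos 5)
Your proposal is correct, and its first half is essentially the paper's computation: you evaluate $H^\ast_{(p)}[t]$ at the three conjugacy classes via $H^\ast[t](g)=\ehr(P_p^g,t)\det[I-t\,\rho_p(g)]$, obtain the same denominators $(1-t)^p$, $1-t^p$, $(1-t)(1-t^2)^{(p-1)/2}$ and numerators $\sum_j h_j^{(p)}t^j$, $1+t+\cdots+t^{p-1}$, $(1+t^2)^{(p-1)/2}$, and then extract the $\psi_1,\psi_2,\chi$ multiplicities (the paper does this through the regular character of $D_{2p}$ rather than inner products with $\chi_l$, which is an equivalent calculation, and likewise gets integrality for free from $H^\ast[t]\in R(D_{2p})[[t]]$). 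Where you genuinely diverge is the effectiveness step. The paper isolates the inequality $h_j^{(p)}\geq p(g_j^{(p)}-1)+1$ for even $j$ as Lemma~\ref{lem: technical lemma} and proves it by recurrences for $h_j^{(d)}$ and $g_j^{(d)}$ together with an induction on odd $d$ (plus a separate estimate $h_{j-1}^{(d)}\geq g_{j-2}^{(d)}+g_j^{(d)}+\frac{d-3}{2}$). You instead telescope the alternating sum in Proposition~\ref{prop: ohsugi} into the closed form $h_j^{(p)}=\sum_{k=0}^{\min(j,p-1-j)}\binom{p}{k}$ and reduce everything to the single binomial inequality $\binom{p}{2q}\geq p\binom{(p-1)/2}{q}$ for $1\leq q\leq\lfloor(p-1)/4\rfloor$, which your product expansion $\binom{2\ell}{2q}/\binom{\ell}{q}=\prod_{i=0}^{q-1}\tfrac{2\ell-2i-1}{2i+1}\geq 2\ell-2q+1$ settles directly; I checked the equivalence $\binom{2\ell+1}{2q}\geq(2\ell+1)\binom{\ell}{q}\Leftrightarrow\binom{2\ell}{2q}\geq(2\ell+1-2q)\binom{\ell}{q}$, the reduction via $i'=\min\{i,\tfrac{p-1}{2}-i\}$, and that every factor with $i\geq 1$ is indeed at least $1$ in the stated range, so the argument is sound. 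Your route buys a closed form for $h_j^{(p)}$ that the paper never states and a non-inductive, self-contained proof of the key inequality; note that primality is not used in this part, so your argument in fact re-proves Lemma~\ref{lem: technical lemma} for all odd $d$ (and hence could also feed into Theorem~\ref{thm: symm edge poly order 2 group}), whereas the paper's lemma proof is more recursive in flavour. One cosmetic point: under the convention $\binom{n}{k}=0$ for $n<k$, your formal identity actually equals $\frac{(1+t)^d-(2t)^d}{1-t}$ rather than $\frac{(1+t)^d}{1-t}$; since you only use coefficients in degrees at most $p-1$ (indeed at most $(p-1)/2$ after invoking the symmetry $h_j^{(p)}=h_{p-1-j}^{(p)}$), this discrepancy in degrees $\geq p$ is harmless.
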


To prove Theorem~\ref{thm: prime cycles}, we require the following technical lemma.

\begin{lemma}\label{lem: technical lemma}
Let $d\geq 3$ be an odd integer and let $0\leq j \leq \frac{d-1}{2}$ be even. Define
\[
 g_j^{(d)} = \binom{(d-1)/2}{j/2}
 \quad \text{and} \quad
 h_j^{(d)} = (-1)^j \sum_{i=0}^j (-2)^i \binom{d}{i}\binom{d-1-i}{j-i}.
\]
Then the inequality
$h_j^{(d)} \geq d\cdot(g_j^{(d)}-1)+1$
holds.
\end{lemma}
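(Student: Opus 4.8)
The plan is to first replace the alternating-sum expression for $h_j^{(d)}$ by a transparent closed form, and then reduce the claimed bound to an elementary comparison of two binomial coefficients. For the closed form I would read the displayed formula as a definition of $h_j^{(d)}$ for every $j \ge 0$ and compute the generating function. Swapping the order of summation and using the formal identity $\sum_{k\ge0}(-1)^k\binom{d-1-i}{k}t^k=(1-t)^{d-1-i}$ (valid for every integer exponent as a power series, and in any case only indices $i\le j<d$ matter below), one gets
\[
\sum_{j\ge0} h_j^{(d)} t^j=\sum_{i\ge0}2^i\binom{d}{i}t^i(1-t)^{d-1-i}=(1-t)^{d-1}\Bigl(1+\tfrac{2t}{1-t}\Bigr)^{d}=\frac{(1+t)^d}{1-t}.
\]
Since $j\le\frac{d-1}{2}<d$, extracting the coefficient of $t^j$ gives $h_j^{(d)}=\sum_{i=0}^{j}\binom{d}{i}$, a partial sum of the $d$-th binomial row. (Alternatively, one can check that this partial sum obeys the recursion of Proposition~\ref{prop: ohsugi}, the exceptional value $2^{d-1}$ at $j=\tfrac{d-1}{2}$ coming from the symmetry of that row.)

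Next I would write $d=2m+1$ and $j=2\ell$; the hypotheses force $2\ell\le m$, so in particular $\ell\le m/2$, and $g_j^{(d)}=\binom{m}{\ell}$. The target inequality becomes
\[
\sum_{i=0}^{2\ell}\binom{2m+1}{i}\;\ge\;(2m+1)\Bigl(\binom{m}{\ell}-1\Bigr)+1.
\]
For $\ell=0$ both sides equal $1$. For $\ell\ge1$ it suffices to prove the single-term estimate $\binom{2m+1}{2\ell}\ge(2m+1)\binom{m}{\ell}$, since then $\sum_{i=0}^{2\ell}\binom{2m+1}{i}\ge\binom{2m+1}{0}+\binom{2m+1}{2\ell}\ge1+(2m+1)\binom{m}{\ell}$, which already exceeds the right-hand side (the difference being $1+2m\ge0$).

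Finally, to prove $\binom{2m+1}{2\ell}\ge(2m+1)\binom{m}{\ell}$ for $1\le\ell\le m/2$, I would track the ratio $R(\ell):=\binom{2m+1}{2\ell}/\binom{m}{\ell}$. A direct computation yields $R(\ell+1)/R(\ell)=\frac{2m+1-2\ell}{2\ell+1}$, which is $\ge1$ exactly when $\ell\le m/2$, and the base case is $R(1)=\binom{2m+1}{2}/\binom{m}{1}=2m+1$; hence $R$ is non-decreasing on the relevant range and $R(\ell)\ge 2m+1$ throughout. The only genuine obstacle here is the first step, namely recognizing the closed form $h_j^{(d)}=\sum_{i=0}^{j}\binom{d}{i}$; once that is available, everything reduces to the elementary ratio estimate above. (It is worth noting that $\binom{2m+1}{2\ell}\ge(2m+1)\binom{m}{\ell}$ is in fact an \emph{equality} at $\ell=1$, which is why bounding just two terms of the partial sum, rather than the whole sum, is already enough.)
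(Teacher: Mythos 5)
Your proof is correct, but it takes a genuinely different route from the paper. You first identify the closed form $h_j^{(d)}=\sum_{i=0}^{j}\binom{d}{i}$ for $j\le\frac{d-1}{2}$ via the generating function $\frac{(1+t)^d}{1-t}$ (the swap of summation and the restriction to indices $i\le j<d$ are handled correctly, and the closed form is consistent with the recursion and the value $2^{d-1}$ in Proposition~\ref{prop: ohsugi}), and then you reduce the lemma to the single estimate $\binom{2m+1}{2\ell}\ge(2m+1)\binom{m}{\ell}$ for $1\le\ell\le m/2$, which your ratio computation $R(\ell+1)/R(\ell)=\frac{2m+1-2\ell}{2\ell+1}\ge1$ together with $R(1)=2m+1$ settles; discarding all intermediate terms of the partial sum except $i=0$ and $i=2\ell$ indeed leaves a margin of $2m+1$ over the target $(2m+1)\bigl(\binom{m}{\ell}-1\bigr)+1$. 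The paper instead argues by induction on odd $d$, using the recurrences $g_j^{(d)}=g_{j-2}^{(d-2)}+g_j^{(d-2)}$ and $h_j^{(d)}\ge h_{j-2}^{(d-2)}+2h_{j-1}^{(d-2)}+h_j^{(d-2)}$ and reducing the inductive step to $h_{j-1}^{(d)}\ge g_{j-2}^{(d)}+g_j^{(d)}+\frac{d-3}{2}$, which is then checked with the crude bound $h_\ell^{(d)}\ge\binom{d-1}{\ell}$; it never makes the closed form explicit. Your version buys a transparent identity for $h_j^{(d)}$ and a short, induction-free inequality with a lot of slack, while the paper's version stays entirely within the recursive data already recorded in Proposition~\ref{prop: ohsugi} and avoids having to justify the generating-function identity. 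Either argument suffices for Theorem~\ref{thm: prime cycles}.
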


\begin{proof}
In the case of $j=0$, the statement follows because $h_{0}^{(d)} = g_{0}^{(d)} = 1$.
Hence, we let $0 < j \leq \frac{d-1}{2}$.
In particular, we have $d \geq 5$.

We start by observing the recurrence relations
\[
 g_j^{(d)} = g_{j-2}^{(d-2)} + g_j^{(d-2)}
 \quad \text{and} \quad
 h_j^{(d)} \geq h_{j-2}^{(d-2)}+2h_{j-1}^{(d-2)}+h_j^{(d-2)}
\]
for $0<j\leq \frac{d-1}{2}$ and $g_0^{(d)}=h_0^{(d)}=1$.
The inequality for $h_j^{(d)}$ is an equality if $j < (d-1)/2$. If $j=\frac{d-1}{2}$ then we get
\[h_{\frac{d-1}{2}}^{(d)} = 4h_{\frac{d-3}{2}}^{(d-2)} > 2h_{\frac{d-3}{2}}^{(d-2)} + 2h_{\frac{d-5}{2}}^{(d-2)}=h_{\frac{d-5}{2}}^{(d-2)} + 2h_{\frac{d-3}{2}}^{(d-2)}+h_{\frac{d-1}{2}}^{(d-2)}. \]

For $j>0$, we prove the statement by induction on odd $d$.
Assume
$
h_j^{(d)} > d\cdot(g_j^{(d)}-1)+1
$.
Then, by the recurrences, we have:
\begin{align*}
    h_j^{(d+2)} &\geq h_{j-2}^{(d)} + 2h_{j-1}^{(d)} + h_j^{(d)} 
    > d(g_{j-2}^{(d)} -1) + 1 + 2 h_{j-1}^{(d)} + d (g_j^{(d)}- 1) + 1  
    = d(g_j^{(d+2)}-2)+2+2h_{j-1}^{(d)}.
\end{align*}
At the same time, we can write:
\[
    (d+2)(g_j^{(d+2)}-1)+1 = d(g_j^{(d+2)}-2)+2 + 2(g_{j-2}^{(d)}+g_j^{(d)})+d-3.
\]
Hence it remains to prove that $h_{j-1}^{(d)}\geq g_{j-2}^{(d)}+g_j^{(d)} + \frac{d-3}{2}$.

Here, by our assumption, we let $j:=2k$ and $d:=2n+1$, where $k \geq 1$, $n\geq 2$ and $2k \leq n$.
Since $h_\ell^{(d)} \geq \binom{d-1}{\ell}$ holds for any $\ell$, we get the following inequalities:
\begin{align*}
    h_{j-1}^{(d)}=h_{2k-1}^{(2n+1)} &\geq \binom{2n}{2k-1}\geq \binom{2n}{k} \geq \binom{n+1}{k}+n-1 \\
    &=\binom{n}{k}+\binom{n}{k-1}+n-1 = g_{j-2}^{(d)} + g_j^{(d)} +\frac{d-3}{2}.
\end{align*}
This concludes the proof.
\end{proof}

\begin{proof}[Proof of Theorem \ref{thm: prime cycles}]
For the reflection $s$, we obtain
\[
\det(I-t \cdot \rho_p(s)) = \det\begin{bmatrix}
1-t & 0 & 0 & \cdots \\
0 & \begin{matrix}
1 & -t \\
-t & 1
\end{matrix} & 0 & \cdots \\
0 & 0 & \begin{matrix}
1 & -t \\
-t & 1
\end{matrix} & \cdots \\
\vdots & \vdots & \vdots & \ddots
\end{bmatrix} = (1-t)(1-t^2)^{\frac{p-1}{2}}.
\]
For the rotation $r$, note that $p$ is odd, so we get
$
\det(I-t \cdot \rho_p (r)) = 1 + (-t)^{p}=1-t^p.
$
Since $p$ is a prime number, recall that the rotation $r$, and any power $r^k$ with $1 \le k \le p-1$, fixes only the origin.
That is $P_p^{r^k} = \{0\}$, and so $\ehr(P_p^r, t) = \frac{1}{1-t}$.
Using this and the description of the Ehrhart series in Propositions~\ref{prop: ohsugi} and \ref{prop: odd_cycles}, we obtain:
\begin{align*}
    H^\ast_{(p)}[t](1) &= h_0^{(p)} + h_1^{(p)} t + \cdots + h_{p-1}^{(p)} t^{p-1},\\
    H^\ast_{(p)}[t](s) &= (1+t^2)^{\frac{p-1}{2}}, \\
    H^\ast_{(p)}[t](r) &= \frac{1-t^p}{1-t} = 1 + t + \cdots + t^{p-1},
\end{align*}
where $h_j^{(p)}$ are the values specified in Proposition \ref{prop: ohsugi}.

Consider now the character of the regular module $\RR D_{2p}$, which is given by $\psi_1+\psi_2+2\sum_{j} \chi_j$.
It is well known that this character evaluates to zero at every element of $D_{2p}$ except at $1$ where it evaluates to $2p$.
Hence, we deduce that the composite character $\chi = \sum_j \chi_j$, obtained by adding together all irreducible two-dimensional characters of $D_{2p}$, is given by:
\[
\begin{tabular}{c|ccc}
    & $1$ & $r^k$ & $sr^k$ \\
    \hline
    $\chi$ & $p-1$ & $-1$ & $0$
\end{tabular}.
\]
The coefficients $H^\ast_{(p),j}$ of the $H^\ast$-series are given by
\[
H^\ast_{(p),j} = \frac{1}{2p}\left\{\begin{matrix}
(h_j^{(p)}-1+p(g_j^{(p)}+1)) \psi_1 + (h_j^{(p)}-1-p(g_j^{(p)}-1))\psi_2 + (2h_j^{(p)}-2)\chi & \text{if $j$ is even,} \\
(p+h_j^{(p)}-1)\psi_1 + (p+h_j^{(p)}-1)\psi_2 + (2h_j^{(p)}-2) \chi & \text{if $j$ is odd.}
\end{matrix}\right.
\]
It remains to show that these quantities are non-negative integers.
Non-negativity follows from Lemma~\ref{lem: technical lemma} and integrality follows immediately from the fact that $H^\ast[t]$ is an element of $R(D_{2p})[[t]]$.
\end{proof}

In the last part of this section, we study the equivariant Ehrhart theory of the order $2$ subgroups associated to the reflections described in Proposition~\ref{prop: odd_cycles}.
Fix the subgroup $S_2 = \{1, s\}$ of $D_{2d}$.
We write $\chi_1$ and $\chi_2$ for the trivial and non-trivial characters of $S_2$ respectively.

\begin{theorem}\label{thm: symm edge poly order 2 group}
Let $d\geq 3$ be an integer and let $\ell = \lfloor d/2 \rfloor$ and $b\in\set{0,1}$ be integers such that $d=2\ell+1+b$.
The equivariant $H^\ast$-series of $P_d$ under the action of $S_2$, denoted $H^\ast_{(d)}[t]$, is a polynomial of degree $d-1$ and its coefficients $H^\ast_{(d),j}$ are given by
\[
H^\ast_{(d),j} = \frac{1}{2}
\left[(h_j^{(d)} + g_j^{(d)})\chi_1 + (h_j^{(d)} - g_j^{(d)})\chi_2\right].
\]
where $h_j^{(d)}$ follows the notation from Proposition \ref{prop: ohsugi} and $g_j^{(d)}$ are the coefficients of the polynomial $(1+t)^{b} (1+t^2)^{\ell} := g_0^{(d)} + g_1^{(d)} t + \cdots + g_{d-1}^{(d)}t^{d-1}$. In particular, $H^\ast_{(d)}[t]$ is effective.
\end{theorem}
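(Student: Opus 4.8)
The plan is to follow exactly the strategy of the proof of Theorem~\ref{thm: prime cycles}, but now with the much smaller group $S_2 = \{1,s\}$, where everything is driven by the two evaluations $H^\ast_{(d)}[t](1)$ and $H^\ast_{(d)}[t](s)$. First I would record these two polynomials. Evaluating at the identity, Remark~\ref{rmk: prelim setup evaluate at identity} together with Proposition~\ref{prop: ohsugi} gives $\ehr(P_d,t) = (h_0^{(d)} + h_1^{(d)} t + \cdots + h_{d-1}^{(d)} t^{d-1})/(1-t)^d$, so $H^\ast_{(d)}[t](1) = \sum_j h_j^{(d)} t^j$. Evaluating at $s$: the permutation matrix $\rho_d(s)$ decomposes, as in the displayed computation in the proof of Theorem~\ref{thm: prime cycles}, into a $(1-t)$ block coming from the fixed coordinate(s) together with $\ell = \lfloor d/2\rfloor$ blocks $\left[\begin{smallmatrix} 1 & -t \\ -t & 1\end{smallmatrix}\right]$, plus an extra $(1-t)$ block when $d$ is even (the second fixed coordinate $w_0$). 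Concretely $\det(I - t\rho_d(s)) = (1-t)(1-t^2)^\ell$ when $d$ is odd and $(1-t)^2(1-t^2)^\ell$ when $d$ is even; in the uniform notation $d = 2\ell+1+b$ this is $(1-t)^{1+b}(1-t^2)^\ell$. On the other hand $\ehr(P_d^s,t) = \ehr(P_d,t)(s)$, and Proposition~\ref{prop: odd_cycles} (for both parities, since it covers $P_{2\ell+1}^s$ and $P_{2\ell+2}^s$) gives $\ehr(P_d^s,t) = (1+t^2)^\ell/\big((1-t)(1-t^2)^\ell\big)$. Multiplying numerator and denominator by $(1-t)^b$ when $d$ is even, we get $H^\ast_{(d)}[t](s) = (1+t)^b(1+t^2)^\ell = \sum_j g_j^{(d)} t^j$, which is the definition of $g_j^{(d)}$ in the statement. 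In particular $H^\ast_{(d)}[t]$ is a polynomial of degree $d-1$, since both evaluations are polynomials of degree exactly $d-1$ (the leading term of $(1+t)^b(1+t^2)^\ell$ is $t^{2\ell+b} = t^{d-1}$, matching $h_{d-1}^{(d)} = h_0^{(d)} = 1$).

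Next I would invert the character table of $S_2$. Since $\chi_1$ and $\chi_2$ are the trivial and sign characters with $\chi_1(1)=\chi_2(1)=1$, $\chi_1(s)=1$, $\chi_2(s)=-1$, any class function $f$ on $S_2$ equals $\tfrac12(f(1)+f(s))\chi_1 + \tfrac12(f(1)-f(s))\chi_2$. Applying this coefficientwise to $H^\ast_{(d)}[t] = \sum_j H^\ast_{(d),j} t^j$ with $H^\ast_{(d),j}(1) = h_j^{(d)}$ and $H^\ast_{(d),j}(s) = g_j^{(d)}$ yields precisely the claimed formula
\[
H^\ast_{(d),j} = \tfrac12\big[(h_j^{(d)} + g_j^{(d)})\chi_1 + (h_j^{(d)} - g_j^{(d)})\chi_2\big].
\]

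It remains to prove effectiveness, i.e.\ that each $H^\ast_{(d),j}$ is a non-negative integer combination of $\chi_1,\chi_2$. Integrality is automatic: $H^\ast_{(d)}[t] \in R(S_2)[[t]]$ by construction, so the two coefficients $\tfrac12(h_j^{(d)}\pm g_j^{(d)})$ are integers (equivalently, one checks $h_j^{(d)} \equiv g_j^{(d)} \pmod 2$). Non-negativity amounts to the inequality $h_j^{(d)} \geq g_j^{(d)}$ for all $0 \leq j \leq d-1$, since the other coefficient $h_j^{(d)} + g_j^{(d)}$ is visibly non-negative. This is the main obstacle and the analogue of Lemma~\ref{lem: technical lemma} in the prime case. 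I expect to establish it by the same style of argument: use the recursion $h_j^{(d)} = h_{j-1}^{(d-1)} + h_j^{(d-1)}$ from Proposition~\ref{prop: ohsugi} (valid away from the odd top coefficient, where $h_{(d-1)/2}^{(d)} = 2^{d-1}$ is large) together with the corresponding recursion for the $g_j^{(d)}$ — namely that $(1+t)^b(1+t^2)^\ell$ relates to $(1+t)^{b'}(1+t^2)^{\ell'}$ at the previous value of $d$ by multiplication by $(1+t)$ (when passing from odd $2\ell+1$ to even $2\ell+2$, so $b:0\to1$, $\ell$ fixed) or by $(1+t^2)$ with a shift (when passing from even to odd). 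One then inducts on $d$, checking small base cases by hand, and uses the symmetry $h_j^{(d)} = h_{d-1-j}^{(d)}$ together with $g_j^{(d)} = g_{d-1-j}^{(d)}$ (the palindromicity of $(1+t)^b(1+t^2)^\ell$) to reduce to $j \leq \lfloor (d-1)/2\rfloor$. A cleaner route, which I would try first, is a direct combinatorial comparison: $g_j^{(d)}$ counts something small (lattice points of the dilated cross-polytope $\tfrac12\Diamond_\ell$, refined by degree), $h_j^{(d)}$ counts the larger $h^\ast$-data of $P_d$, and the inclusion $P_d^s \hookrightarrow P_d$ should give a combinatorial injection; but since $h^\ast$-coefficients are not monotone under inclusion of polytopes in general, the inductive inequality argument mirroring Lemma~\ref{lem: technical lemma} is the safe fallback and is what I expect to write up.
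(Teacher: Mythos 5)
There are two problems. First, your computation of $\det(I - t\rho_d(s))$ for even $d$ is wrong: by the paper's labelling, $s$ is the reflection fixing the \emph{fewest} vertices of $C_d$, so for $d = 2\ell+2$ it fixes no vertex at all (it swaps the adjacent pairs $v_0 \leftrightarrow w_0$ and $v_\ell \leftrightarrow w_\ell$); there is no ``second fixed coordinate $w_0$''. The correct value is $(1-t^2)^{\ell+1}$, i.e.\ $(1-t)^{1-b}(1-t^2)^{\ell+b}$ in the uniform notation, not your $(1-t)^{1+b}(1-t^2)^{\ell}$ (that would be the determinant for the other conjugacy class of reflections $sr$, whose fixed polytope is not the one in Proposition~\ref{prop: odd_cycles}). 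This error is not cosmetic: combining your determinant with $\ehr(P_d^s,t) = (1+t^2)^\ell/\bigl((1-t)(1-t^2)^\ell\bigr)$ yields $H^\ast_{(d)}[t](s) = (1-t)^b(1+t^2)^\ell$, and your phrase ``multiplying numerator and denominator by $(1-t)^b$'' cannot turn $(1-t)^b$ into $(1+t)^b$. With the correct determinant the evaluation $H^\ast_{(d)}[t](s) = (1+t)^b(1+t^2)^\ell$ does follow, and then your character-table inversion giving $H^\ast_{(d),j} = \tfrac12\bigl[(h_j^{(d)}+g_j^{(d)})\chi_1 + (h_j^{(d)}-g_j^{(d)})\chi_2\bigr]$ is fine.

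Second, the heart of the theorem is effectiveness, which you correctly reduce to $h_j^{(d)} \geq g_j^{(d)}$ but then do not prove: ``I expect to establish it by the same style of argument'' is a plan, not an argument, so as written the proof is incomplete. Moreover, you did not notice that no new induction is needed for odd $d$: the quantity $g_j^{(d)} = \binom{(d-1)/2}{j/2}$ appearing in Lemma~\ref{lem: technical lemma} is exactly the coefficient of $t^j$ in $(1+t^2)^{(d-1)/2}$ (and $g_j^{(d)} = 0$ for odd $j$), so the lemma already gives $h_j^{(d)} \geq d(g_j^{(d)}-1)+1 \geq g_j^{(d)}$. The paper then handles even $d$ in a few lines: for even $j$ one has $g_j^{(d)} = g_j^{(d-1)}$ and $h_j^{(d)} \geq h_{j-1}^{(d-1)} + h_j^{(d-1)} \geq h_j^{(d-1)} \geq g_j^{(d-1)}$; for odd $j$ one may assume $j \leq \ell-1$ by palindromicity, and then $g_j^{(d)} = g_{j-1}^{(d)}$ together with unimodality of $H^\ast_{(d)}[t](1)$ gives $h_j^{(d)} \geq h_{j-1}^{(d)} \geq g_{j-1}^{(d)} = g_j^{(d)}$. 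To complete your write-up you must either carry out your proposed induction in full or, more efficiently, invoke Lemma~\ref{lem: technical lemma} and the short parity argument above.
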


\begin{proof}
By a similar argument to the proof of Theorem \ref{thm: prime cycles}, we obtain
\[
\det(I-t\cdot \rho_d(s)) = (1-t)^{1-b} (1-t^2)^ {\ell+b}.
\]
By the description of $\ehr(P_d,t)$ in Proposition~\ref{prop: odd_cycles}, we have:
\begin{align*}
    H^\ast_{(d)}[t](1) &= h_0^{(d)} + h_1^{(d)} t + \cdots + h_{d-1}^{(d)} t^{d-1}, \\
    H^\ast_{(d)}[t](s) &= (1+t)^{b} (1+t^2)^{\ell} = g_0^{(d)} + g_1^{(d)} t + \cdots + g_{d-1}^{(d)}t^{d-1}.
\end{align*}
For the coefficients $H^\ast_{(d),j}$ of the $H^\ast$-series, we obtain
\[
H^\ast_{(d),j} = \frac{1}{2}
\left[(h_j^{(d)} + g_j^{(d)})\chi_1 + (h_j^{(d)} - g_j^{(d)})\chi_2\right].
\]
It remains to show that $H^\ast_{(d)}$ is effective, for which it suffices to show that $h_j^{(d)}\geq g_j^{(d)}$.
If $d$ is odd, this follows directly from Lemma \ref{lem: technical lemma}.
If $d$ is even, we start with the case where $j$ is also even.
We can use that in this case, $g_j^{(d)}=g_j^{(d-1)}$, giving us
\[h_j^{(d)}\geq h_{j-1}^{(d-1)} + h_j^{(d-1)}\geq h_j^{(d-1)}\geq g_j^{(d-1)}= g_j^{(d)}.\]
For the case where $j$ is odd, we may assume without loss of generality that $j\leq \ell-1$.
In this case, we use $g_j^{(d)}=g_{j-1}^{(d)}$ and the fact that $H^\ast_{(d)}(1)$ is unimodal, to conclude
\[h_j^{(d)}\geq h_{j-1}^{(d)} \geq g_{j-1}^{(d)} = g_j^{(d)}.\]
So we have shown that $H^\ast_{(d)}[t]$ is effective, completing the proof.
\end{proof}

\section{Rational cross-polytopes}\label{sec: rational cross polytopes}

Let $k, d \in \ZZ$ be positive integers with $k$ odd and $d \geq 2$. Throughout this section we consider the polytope
\[
P(k,d) = \conv\left\{\pm e_1, \dots, \pm e_{d-1}, \pm \frac k2e_d \right\} \subseteq M_\RR \cong \RR^d.
\]
In this section we prove Theorem~\ref{thm: cross polytope coord reflection} which gives a complete description of the equivariant $H^\ast$-series of $P(k,d)$ under the action of a reflection group. We observe in Example~\ref{ex: cross polytope rational non effective} that a rational analogue of Conjecture~\ref{effectiveness_conjecture} does not hold for rational polytopes with period one.

\smallskip

The Ehrhart series $\ehr(P(k,d), t)$ has the following explicit description.
\begin{proposition}[{An application of \cite[Theorem~1.4]{BJM2013}}] \label{prop: ehrhart series of cross polytope}
For each $k$ odd and $d \geq 2$ we have
\[
\ehr(P(k,d), t) =
(1-t)\ehr([k/2, -k/2], t) \frac{(1+t)^{d-1}}{(1-t)^d} =
\frac{(1 + (k-1)t + kt^2)(1+t)^{d-2}}{(1-t)^{d+1}}.
\]
\end{proposition}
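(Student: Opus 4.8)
The plan is to compute the Ehrhart series $\ehr(P(k,d),t)$ by realizing $P(k,d)$ as a ``free join''-like construction over the $d$ coordinate directions and applying the formula of \cite[Theorem~1.4]{BJM2013} for Ehrhart series of cross-polytope-type constructions. Concretely, $P(k,d)$ is the convex hull of the segments $[-e_i,e_i]$ for $i=1,\dots,d-1$ together with the segment $[-\tfrac k2 e_d,\tfrac k2 e_d]$; each such segment is a lattice (resp.\ rational) polytope of dimension one whose Ehrhart series we know. So the first step is to record that $\ehr([-1,1],t) = \frac{1+t}{(1-t)^2}$ (equivalently $(1-t)\ehr([-1,1],t)=\frac{1+t}{1-t}$, accounting for one lattice point in the relative interior at the origin shared across factors) and to compute $\ehr([k/2,-k/2],t)$ directly: the dilate $m[-\tfrac k2,\tfrac k2]=[-\tfrac{mk}2,\tfrac{mk}2]$ contains $mk+1$ lattice points, so $\ehr([k/2,-k/2],t)=\sum_{m\geq 0}(mk+1)t^m = \frac{1}{1-t}+\frac{kt}{(1-t)^2} = \frac{1+(k-1)t}{(1-t)^2}$.

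Next I would invoke \cite[Theorem~1.4]{BJM2013} in the form that expresses the Ehrhart series of a convex hull of segments through the origin (a ``generalized cross-polytope'') as a product: normalizing each factor by $(1-t)$ to strip the shared origin, the Ehrhart series of the hull is $\frac{1}{1-t}\prod_{i=1}^{d}\big((1-t)\,\ehr(S_i,t)\big)$ where $S_i$ is the $i$-th segment. Applying this with $S_1=\dots=S_{d-1}=[-1,1]$ and $S_d=[k/2,-k/2]$ gives
\[
\ehr(P(k,d),t) = \frac{1}{1-t}\left(\frac{1+t}{1-t}\right)^{d-1}\left((1-t)\ehr([k/2,-k/2],t)\right) = (1-t)\,\ehr([k/2,-k/2],t)\,\frac{(1+t)^{d-1}}{(1-t)^{d}},
\]
which is precisely the first claimed equality. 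The second equality is then a routine algebraic simplification: substitute $(1-t)\ehr([k/2,-k/2],t) = \frac{1+(k-1)t}{1-t}$, so the middle expression becomes $\frac{(1+(k-1)t)(1+t)^{d-1}}{(1-t)^{d+1}}$, and then verify $(1+(k-1)t)(1+t) = 1+kt+(k-1)t + (k-1)t^2 = 1 + (k-1)t + kt^2$ — wait, more carefully $(1+(k-1)t)(1+t) = 1 + t + (k-1)t + (k-1)t^2 = 1 + kt + (k-1)t^2$; to match the stated numerator $1+(k-1)t+kt^2$ one should instead keep one factor of $(1+t)$ with the other term, i.e.\ write $\frac{1+(k-1)t}{1-t}\cdot\frac{(1+t)^{d-1}}{(1-t)^d}$ and observe the intended grouping pairs differently; I would double-check the bookkeeping of which $(1-t)$ cancels against $\ehr([k/2,-k/2],t)$ versus staying in the denominator, since the exponent $d-2$ in the final numerator factor $(1+t)^{d-2}$ signals that one $(1+t)$ is absorbed. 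The cleanest route is to expand both the middle and right expressions to common denominator $(1-t)^{d+1}$ and check the numerators agree as polynomials.

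The main obstacle is purely the careful application of \cite[Theorem~1.4]{BJM2013}: I need its precise statement (which I am permitted to assume) to confirm that the construction ``convex hull of segments through the origin'' is exactly the case it covers, and to get the normalization factors of $(1-t)$ exactly right — a mismatch of one such factor is the difference between $(1+t)^{d-1}$ and $(1+t)^{d-2}$ in the final form. Once the product formula is correctly instantiated, everything else is elementary power-series and polynomial algebra. I would present the proof as: (i) compute the two one-dimensional Ehrhart series; (ii) cite \cite[Theorem~1.4]{BJM2013} to get the product formula and deduce the first equality; (iii) simplify to obtain the second equality by clearing denominators and comparing numerators.
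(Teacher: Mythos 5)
Your overall strategy --- treat $P(k,d)$ as the free sum of the $d$ coordinate segments, apply \cite[Theorem~1.4]{BJM2013} in the iterated form $\ehr(S_1\oplus\cdots\oplus S_d,t)=\frac{1}{1-t}\prod_i\bigl((1-t)\ehr(S_i,t)\bigr)$, and then simplify --- is exactly the intended route (the paper offers no separate proof; the statement is presented precisely as such an application). The genuine error is in your one-dimensional computation, and it is the source of the mismatch you flagged but did not resolve. Since $k$ is odd, the dilate $m\bigl[-\tfrac k2,\tfrac k2\bigr]$ has half-integer endpoints whenever $m$ is odd, so it contains $mk$ lattice points, not $mk+1$; only for even $m$ does it contain $mk+1$. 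Hence
\[
\ehr\bigl([k/2,-k/2],t\bigr)=\sum_{m\ge 0}mk\,t^m+\sum_{m\ \mathrm{even}}t^m=\frac{kt}{(1-t)^2}+\frac{1}{1-t^2}=\frac{1+(k-1)t+kt^2}{(1-t)(1-t^2)},
\]
a quasipolynomial phenomenon your formula $\frac{1+(k-1)t}{(1-t)^2}$ misses (the correct series is the one the paper uses for $P(k,2)^\sigma$ in Proposition~\ref{prop: cross polytope ei reflection}).

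With this corrected series your instantiation of the free-sum formula needs no further adjustment: the normalization factors are right, and the discrepancy you attributed to possible misbookkeeping of $(1-t)$ factors in \cite{BJM2013} disappears. Indeed
\[
(1-t)\ehr\bigl([k/2,-k/2],t\bigr)\,\frac{(1+t)^{d-1}}{(1-t)^d}=\frac{1+(k-1)t+kt^2}{(1-t)(1+t)}\cdot\frac{(1+t)^{d-1}}{(1-t)^d}=\frac{\bigl(1+(k-1)t+kt^2\bigr)(1+t)^{d-2}}{(1-t)^{d+1}},
\]
so the exponent $d-2$ arises from the extra $(1+t)$ in the denominator $(1-t)(1-t^2)$ of the rational segment's Ehrhart series, not from any regrouping of the numerator. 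As written, your proposal does not establish the second equality (and with your segment series it is false), so the fix is simply to redo the lattice-point count of the odd dilates; everything else in your outline then goes through.
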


In the following, we will refer to $(1+(k-1)t+kt^2)(1+t)^{d-2}$ by $\Tilde{h}_{P(k,d)}$.
We denote by $G = \{1, \sigma \}$ the group of order two. We fix its two irreducible characters: the trivial character $\chi_1$ and non-trivial character $\chi_2$. Fix some index $i \in [n]$. We let $G$ act on the lattice $\ZZ[e_1, \dots, e_d]$ by a coordinate reflection $\sigma(e_i) = -e_i$ and $\sigma(e_j) = e_j$ for all $j \in [n] \backslash \{i\}$.

\begin{proposition}\label{prop: cross polytope ei reflection}
If $i \in \{1,2, \dots, d-1 \}$, then $H^\ast[t] = \chi_1 \cdot \Tilde{h}_{P(k,d)}(t)$.
\end{proposition}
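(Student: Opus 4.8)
We want to show that when the coordinate reflection $\sigma$ negates one of the "unit" coordinates $e_i$ with $i \in \{1, \dots, d-1\}$, the equivariant $H^\ast$-series is simply $\chi_1 \cdot \Tilde h_{P(k,d)}(t)$, i.e.\ it is the trivial character times the ordinary $h^\ast$-polynomial of $P(k,d)$. By definition the equivariant $H^\ast$-series is determined by its values at the two group elements: $H^\ast[t](1)$ and $H^\ast[t](\sigma)$. Since $G = \{1,\sigma\}$ has exactly two irreducible characters $\chi_1, \chi_2$ with $\chi_1(1)=\chi_2(1)=1$, $\chi_1(\sigma)=1$, $\chi_2(\sigma)=-1$, an element $\alpha\chi_1 + \beta\chi_2 \in R(G)$ is recovered from its evaluations by $\alpha = \tfrac12(\text{val}(1)+\text{val}(\sigma))$, $\beta = \tfrac12(\text{val}(1)-\text{val}(\sigma))$. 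So the claim $H^\ast[t] = \chi_1\cdot\Tilde h_{P(k,d)}(t)$ is equivalent to the two scalar identities $H^\ast[t](1) = \Tilde h_{P(k,d)}(t)$ and $H^\ast[t](\sigma) = \Tilde h_{P(k,d)}(t)$.

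\textbf{Step 1: the identity component.} The evaluation $H^\ast[t](1)$ is just the ordinary $h^\ast$-polynomial of $P(k,d)$, which by Proposition~\ref{prop: ehrhart series of cross polytope} equals $\Tilde h_{P(k,d)}(t) = (1+(k-1)t+kt^2)(1+t)^{d-2}$. This is immediate from Remark~\ref{rmk: prelim setup evaluate at identity}.

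\textbf{Step 2: the $\sigma$ component.} This is the substantive step. By Remark~\ref{rmk: prelim setup evaluate at identity}, $H^\ast[t](\sigma)$ is obtained from the Ehrhart series of the fixed polytope $P(k,d)^\sigma = P(k,d) \cap \{x : x_i = 0\}$ via $\ehr(P(k,d)^\sigma, t) = H^\ast[t](\sigma)/\det[I - t\cdot\rho(\sigma)|_{\text{relevant space}}]$. First I would identify the fixed polytope: intersecting the cross-polytope $P(k,d)$ with the hyperplane $\{x_i = 0\}$ (for $i \le d-1$) simply deletes the pair of vertices $\pm e_i$, leaving $P(k,d)^\sigma = \conv\{\pm e_j : j \in [d-1]\setminus\{i\}\} \cup \{\pm\tfrac k2 e_d\}$, which is exactly $P(k, d-1)$ (up to relabeling coordinates). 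Hence, again by Proposition~\ref{prop: ehrhart series of cross polytope}, $\ehr(P(k,d)^\sigma, t) = \dfrac{(1+(k-1)t+kt^2)(1+t)^{d-3}}{(1-t)^{d}}$. Next I need the denominator: in the main setup from Remark~\ref{rmk: prelim alternative setup}, the lattice is built so that $P$ sits at height $1$; the relevant determinant factor for $\sigma$ is $\det[I - t\cdot\rho(\sigma)]$ on the full ambient space, which splits as $(1-t)$ (for the invariant height direction) times $\det[I - t\cdot\rho(\sigma)|_{M_0}]$. Since $\sigma$ negates exactly one of the $d$ ambient coordinates and fixes the rest, $\det[I - t\cdot\rho(\sigma)|_{M_0}] = (1-t)^{d-1}(1+t)$, so the total denominator is $(1-t)^d(1+t)$. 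Therefore
\[
H^\ast[t](\sigma) = \ehr(P(k,d)^\sigma, t)\cdot(1-t)^d(1+t) = (1+(k-1)t+kt^2)(1+t)^{d-2} = \Tilde h_{P(k,d)}(t).
\]

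\textbf{Step 3: assemble.} Combining Steps 1 and 2, $H^\ast[t](1) = H^\ast[t](\sigma) = \Tilde h_{P(k,d)}(t)$, so $H^\ast[t] = \chi_1\cdot\Tilde h_{P(k,d)}(t)$, which is in particular a polynomial and effective. \textbf{The main obstacle} I anticipate is bookkeeping the denominators correctly — specifically making sure the height-$1$ affine embedding contributes the extra $(1-t)$ factor consistently on both the identity and $\sigma$ sides, and correctly computing $\det[I-t\cdot\rho(\sigma)]$ as a representation-theoretic alternating sum $\sum_i[\Lambda^i M_\RR](-t)^i$ rather than just the naive matrix determinant; but since $\sigma$ acts diagonally after a change of basis, this reduces to the transparent factor $(1-t)^d(1+t)$, so the identification of the fixed polytope with $P(k,d-1)$ is really the only step requiring genuine (if routine) geometric care.
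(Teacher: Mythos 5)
Your proposal is correct and follows essentially the same route as the paper: evaluate the series at $1$ and at $\sigma$, identify the fixed polytope with a lower-dimensional cross-polytope, and divide by the denominator $(1-t)\det(I-t\rho(\sigma)) = (1-t)^d(1+t)$. The only point the paper treats more carefully is the case $d=2$: there the fixed polytope is the segment $[-k/2,k/2]$ and Proposition~\ref{prop: ehrhart series of cross polytope} (stated only for $d\geq 2$) cannot be cited for ``$P(k,1)$'', so its Ehrhart series is computed directly; your uniform formula with the factor $(1+t)^{d-3}$ still yields the correct value after cancellation, but strictly speaking this edge case needs the separate check.
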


\begin{proof}
The reflection $\sigma$ acts on $P(k,d)$ by the diagonal matrix $A = \diag(1,\dots, 1, -1, 1, \dots, 1)$ where $-1$ appears in position $i$. Therefore, we may compute $\det(I - tA) = (1-t)^{d-1}(1+t)$.

We proceed by taking cases on $d$; either $d = 2$ or $d > 2$. Fix $d = 2$. In this case, the fixed polytope $P(k,2)^\sigma$ is a line segment $[k/2, -k/2]$ and so its Ehrhart series is
\[
\ehr(P(k,2)^\sigma, t) =
\frac{1+(k-1)t + kt^2}{(1-t)(1-t^2)} =
\frac{\Tilde{h}_{P(k,2)}(t)}{(1-t)\det(I - tA)}.
\]
On the other hand, the identity element $e \in G$ acts by the identity matrix $I$ and so $\det(I - tI) = (1-t)^3$. Clearly, this fixes the entire polytope $P(k,d)$, so its Ehrhart series is given by
\[
\ehr(P(k,2), t) =
\frac{1+(k-1)t + kt^2}{(1-t)^3} =
\frac{\Tilde{h}_{P(k,2)}(t)}{(1-t)\det(I - tI)}.
\]
And so we have that $H^\ast[t] = \chi_1 \cdot \Tilde{h}_{P(k,2)}(t)$ and we are done for the case $d = 2$.

\smallskip

Next, let $d > 2$. The fixed polytope $P(k,d)^\sigma$ is equal to $P(k,d-1)$ in a one-dimension-higher ambient space, and so, by Proposition~\ref{prop: ehrhart series of cross polytope}, its Ehrhart series is given by
\[
\ehr(P(k,d)^\sigma, t) =
\frac{(1+(k-1)t+kt^2)(1+t)^{d-3}}{(1-t)^d} \frac{(1+t)}{(1+t)}=
\frac{\Tilde{h}_{P(k,d)}(t)}{(1-t)\det(I - tA)}.
\]
On the other hand the identity element $e \in G$ fixes the entire polytope $P(k,d)$ and so its Ehrhart series is
\[
\ehr(P(k,d), t) =
\frac{\Tilde{h}_{P(k,d)}(t)}{(1-t)^{d+1}} =
\frac{\Tilde{h}_{P(k,d)}(t)}{(1-t)\det(I - tI)}.
\]
And so it follows that $H^\ast[t] = \chi_1 \cdot \Tilde{h}_{P(k,d)}(t)$ and we are done for the case $d > 2$.
\end{proof}

\begin{proposition}\label{prop: cross poly ed reflection}
If $i=d$, then $H^\ast[t] = \sum_{j = 0}^d (a_j \chi_1 + b_j \chi_2) t^j$ where
\[
a_j = \binom{d-2}{j} + \frac 12 (k+1)\binom{d-1}{j-1}
\text{ and }
b_j = \frac 12(k-1) \binom{d-1}{j-1} - \binom{d-2}{j-1}
\]
and $\binom{n}{k}$ is defined to be zero if $k < 0$ or $k > n$.

\end{proposition}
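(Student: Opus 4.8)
The plan is to mirror the proof of Proposition~\ref{prop: cross polytope ei reflection}: first determine the two evaluations $H^\ast[t](1)$ and $H^\ast[t](\sigma)$ of the equivariant $H^\ast$-series, then recover the coefficients $a_j$ and $b_j$ by inverting the character table of $G = \{1,\sigma\}$.

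First I would describe the fixed polytope. By \cite[Lemma~5.4]{stapledon2011equivariant}, $P(k,d)^\sigma$ is the convex hull of the averages of the $\sigma$-orbits of the vertices of $P(k,d)$. As $\sigma$ fixes $e_1,\dots,e_{d-1}$ and negates $e_d$, the vertices $\pm e_1,\dots,\pm e_{d-1}$ are singleton orbits while $\{\tfrac{k}{2}e_d, -\tfrac{k}{2}e_d\}$ is a single orbit whose average is the origin. Hence $P(k,d)^\sigma = \conv\{\pm e_1,\dots,\pm e_{d-1}\}$ is the standard $(d-1)$-dimensional cross-polytope, so
\[
\ehr(P(k,d)^\sigma, t) = \frac{(1+t)^{d-1}}{(1-t)^d}.
\]
This is classical, and is also the degenerate case of Proposition~\ref{prop: ehrhart series of cross polytope} (or of \cite[Theorem~1.4]{BJM2013}) in which the interval $[k/2,-k/2]$ is collapsed to a point.

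Next I would compute the relevant determinants, exactly as in the proof of Proposition~\ref{prop: cross polytope ei reflection}. The reflection $\sigma$ acts by $A = \diag(1,\dots,1,-1)$ with $\det(I - tA) = (1-t)^{d-1}(1+t)$, and the identity acts by $I$ with $\det(I - tI) = (1-t)^d$. By the defining relation of the equivariant $H^\ast$-series,
\[
H^\ast[t](1) = (1-t)\det(I - tI)\cdot\ehr(P(k,d),t) = (1-t)^{d+1}\ehr(P(k,d),t),
\]
\[
H^\ast[t](\sigma) = (1-t)\det(I - tA)\cdot\ehr(P(k,d)^\sigma, t) = (1-t)^{d}(1+t)\cdot\ehr(P(k,d)^\sigma, t).
\]
Substituting Proposition~\ref{prop: ehrhart series of cross polytope} into the first and the previous paragraph into the second, these simplify to $H^\ast[t](1) = \Tilde{h}_{P(k,d)}(t) = (1+(k-1)t+kt^2)(1+t)^{d-2}$ and $H^\ast[t](\sigma) = (1+t)^d$. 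In particular $H^\ast[t]$ is a genuine polynomial of degree $d$, which accounts for the index range $0 \le j \le d$ in the statement.

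Finally, writing $H^\ast_j = a_j\chi_1 + b_j\chi_2$ and using $\chi_1(\sigma) = 1$ and $\chi_2(\sigma) = -1$, the system $a_j + b_j = [t^j]H^\ast[t](1)$, $a_j - b_j = [t^j]H^\ast[t](\sigma)$ gives $a_j = \tfrac12\big([t^j]\Tilde{h}_{P(k,d)}(t) + \binom{d}{j}\big)$ and $b_j = \tfrac12\big([t^j]\Tilde{h}_{P(k,d)}(t) - \binom{d}{j}\big)$, where $[t^j]\Tilde{h}_{P(k,d)}(t) = \binom{d-2}{j} + (k-1)\binom{d-2}{j-1} + k\binom{d-2}{j-2}$. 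The remaining step is routine: substituting the two-fold Pascal identity $\binom{d}{j} = \binom{d-2}{j} + 2\binom{d-2}{j-1} + \binom{d-2}{j-2}$ and regrouping via $\binom{d-2}{j-1} + \binom{d-2}{j-2} = \binom{d-1}{j-1}$ produces exactly $a_j = \binom{d-2}{j} + \tfrac12(k+1)\binom{d-1}{j-1}$ and $b_j = \tfrac12(k-1)\binom{d-1}{j-1} - \binom{d-2}{j-1}$; these lie in $\ZZ$ because $k$ is odd, so $H^\ast_j \in R(G)$. I do not expect a genuine obstacle here; the only points needing care are correctly identifying the fixed polytope with its Ehrhart series (and tracking the $(1-t)$ factor contributed by the $G$-fixed direction $e$ of the main setup), together with the bookkeeping of the binomial identities at the ends of the range $0\le j\le d$, where several coefficients vanish by the stated convention.
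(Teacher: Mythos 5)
Your proposal is correct and follows essentially the same route as the paper's proof: identify $P(k,d)^\sigma$ as the standard $(d-1)$-dimensional cross-polytope, compute $H^\ast[t](1)=\Tilde{h}_{P(k,d)}(t)$ and $H^\ast[t](\sigma)=(1+t)^d$ from the determinants $(1-t)^d$ and $(1-t)^{d-1}(1+t)$, and solve the $2\times 2$ system $a_j\pm b_j$ using Pascal's identity. The only cosmetic difference is that you justify the fixed polytope via orbit averaging (Stapledon's Lemma~5.4), where the paper simply observes it directly; this changes nothing of substance.
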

\begin{proof}
The identity $e \in G$ acts by the identity matrix $I$, hence $\det(I - tI) = (1-t)^d$. So, by Proposition~\ref{prop: ehrhart series of cross polytope}, we have
\[
\ehr(P(k,d), t) =
\frac{(1 + (k-1)t + kt^2)(1+t)^{d-2}}{(1-t)^{d+1}} =
\frac{(1 + (k-1)t + kt^2)(1+t)^{d-2}}{(1-t)\det(I - tI)}.
\]

On the other hand, the reflection acts by the diagonal matrix $A = \diag(1, \dots, 1, -1)$ hence $\det(I - tA) = (1-t)^{d-1}(1+t)$. Observe that the fixed polytope $P(k,d)^\sigma$ is a $(d-1)$-dimensional cross-polytope, therefore we have
\[
\ehr(P(k,d)^\sigma,t) =
\frac{(1+t)^{d-1}}{(1-t)^d} =
\frac{(1+t)^d}{(1-t)\det(I-tA)}.
\]
Write $H^\ast[t]=\sum_{j=0}^d(a_j\chi_1+b_j\chi_2)t^j$ for some $a_j$ and $b_j$. By evaluating $H^\ast[t]$ at each group element $g \in G$, we have $H^\ast[t](g) = \ehr(P(k,d)^g, t)(1-t)\det(I - t\rho(g))$. It follows that
\[
\begin{cases}
a_j+b_j=\binom{d-2}{j}+(k-1)\binom{d-2}{j-1}+ k\binom{d-2}{j-2}, \\
a_j-b_j=\binom{d}{j} = \binom{d-2}{j} + 2 \binom{d-2}{j-1} + \binom{d-2}{j-2}
\end{cases}
\]
for $j \in \{0,1,\ldots,d\}$ where $\binom{n}{k}$ is defined to be zero if $k < n$ or $k > n$. By solving this, we obtain the desired conclusion.
\end{proof}

\begin{example}\label{ex: cross polytope rational non effective}
Consider the case $d = 2$ and $k = 1$. The polytope $P(1,2)$ is given by
\[
P(1,2) = \conv\{(1,0), (-1,0), (0, 1/2), (0,-1/2) \}.
\]
The group $G = \{1, \sigma\}$ acts by a coordinate reflection: $\sigma(e_2) = -e_2$ and $\sigma(e_1) = e_1$. The equivariant Ehrhart $H^\ast$-series is $H^\ast[t] = \chi_1 + (\chi_1 - \chi_2)t + \chi_1 t^2$. In particular, $H^\ast[t]$ is polynomial but not effective since $\chi_1 - \chi_2$ is not the character of a representation of $G$.
\end{example}

\begin{remark}
Consider the dilate of the polytope $2P(1,2)$ with the same group action as in Example~\ref{ex: cross polytope rational non effective}. In this case the equivariant $H^\ast$-series is given by $H^\ast[t] = \chi_1 \cdot (1 + 4t + 3t^2) = \chi_1 \cdot \Tilde{h}_{2P(k,d)}(t)$. The example $P(1,2)$ does not extend to an example of a lattice polytope since all lattice points of $P(1,2)$ are fixed by $G$. However, if $G$ is a non-trivial group acting non-trivially on a full dimensional lattice polytope, then at least one lattice point of $P$ is not fixed by $G$. Concretely, we can say the following about two dimensional polytopes.

Suppose that $G$ is the group of order $2$ and irreducible characters $\chi_1$ and $\chi_2$. Assume $G$ acts on a $2$-dimensional lattice $M$ and let $P$ be a $G$-invariant lattice polytope with a polynomial equivariant $H^\ast$-series given by $H^\ast[t] = \chi_1 + (a \chi_1 + b\chi_2)t + c\chi_1 t^2$ for some $a,b,c \in \ZZ$.
By Corollary~6.7 in \cite{stapledon2011equivariant}, $H^\ast[t]$ is effective.
Moreover, since $\chi_1$ corresponds to a trivial permutation representation and $\chi_1+\chi_2$ corresponds to the regular representation, which is a permutation representation as well, the linear coefficient of $H^\ast[t]$ is itself a permutation representation if $a\geq b\geq 0$.
To see that this is satisfied, one first should notice that $2P^\sigma$ is a lattice polytope by Corollary~5.4 in \cite{stapledon2011equivariant} and so it is either a line segment or a point whose vertices have coordinates lying in $\frac{1}{2}\ZZ$. If $P^\sigma$ is a non-lattice point, then the result follows from a simple computation.
So, by Lemma~7.3 in \cite{stapledon2011equivariant} and our assumption that $H^\ast[t]$ is a polynomial, we only need to consider the case where $P^\sigma$ contain a lattice point. So, it follows that $P^\sigma$ is unimodular equivalent to a line segment $[v, w] \subseteq \RR$ with $v$, $w \in \frac 12 \ZZ$. By taking cases on whether $v$ or $w$ lie in $\ZZ$ we can show that the Ehrhart series has the form
\[\ehr(P^\sigma, t) = \frac{1 + rt + st^2}{(1-t)(1-t^2)}\]
with $r,s\geq 0$.
Evaluating $H^\ast[t]$ at $\sigma$ and comparing coefficients gives us $a-b=r\geq 0$

\end{remark}

Let $G = (\ZZ/2\ZZ)^d = \langle \sigma_1, \sigma_2, \dots, \sigma_d \rangle$ be the group of coordinate reflections of $\RR^d$. Explicitly, for each $i, j \in \{1, 2, \dots, d\}$ we have $\sigma_i(e_i) = -e_i$ and $\sigma_i(e_j) = e_j$ if $i \neq j$. Let $\chi_1$ denote the trivial character of $G$ and $\chi_2$ denote the character satisfying $\chi_2(\sigma_d) = -1$ and $\chi_2(\sigma_i) = 1$ for all $i \in \{1, 2, \dots, d-1\}$. The polytope $P(k,d)$ is invariant under $G$. By Propositions~\ref{prop: cross polytope ei reflection} and \ref{prop: cross poly ed reflection} it follows that the equivariant $H^\ast$-series $H^\ast[t]$ of $P$ is a polynomial whose coefficients are integer multiples of $\chi_1$ and $\chi_2$. Moreover, we obtain the following result.

\begin{theorem}\label{thm: cross polytope coord reflection}
With the setup above, we have $H^\ast[t] = \sum_{j = 0}^d (a_j \chi_1 + b_j \chi_2) t^j$ where
\[
a_j = \binom{d-2}{j} + \frac 12 (k+1)\binom{d-1}{j-1}
\text{ and }
b_j = \frac 12(k-1) \binom{d-1}{j-1} - \binom{d-2}{j-1}
\]
and $\binom{n}{k}$ is defined to be zero if $k < 0$ or $k > n$.
\end{theorem}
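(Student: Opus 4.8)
The plan is to reconstruct the virtual characters $H^\ast_j$ from their values on the elements of $G$. Since $G=(\ZZ/2\ZZ)^d$ is abelian, every element is its own conjugacy class, so $H^\ast[t]$ is determined by the polynomials $H^\ast[t](g)$ for $g\in G$; and by the main setup together with Remark~\ref{rmk: prelim setup evaluate at identity} these satisfy $H^\ast[t](g)=\ehr(P(k,d)^g,t)\cdot(1-t)\det[I-t\cdot\rho(g)|_{M_0}]$. I would identify $g\in G$ with the subset $S\subseteq\{1,\dots,d\}$ for which $g=\prod_{i\in S}\sigma_i$; then $\rho(g)|_{M_0}$ negates exactly the coordinates indexed by $S$, so $\det[I-t\cdot\rho(g)|_{M_0}]=(1-t)^{d-|S|}(1+t)^{|S|}$.

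First I would pin down the fixed polytope. Every vertex of $P(k,d)$ is $\pm e_i$ for some $i\le d-1$ or $\pm\tfrac k2 e_d$; such a vertex is fixed by $g$ when its index lies outside $S$ and is swapped with its negative when its index lies in $S$. Hence, by the description of fixed polytopes used in the proof of Proposition~\ref{prop: odd_cycles} (Lemma~5.4 of \cite{stapledon2011equivariant}), $P(k,d)^g$ is the convex hull of the surviving vertices together with the origin. Consequently, if $d\in S$ then $P(k,d)^g$ is the standard cross-polytope of dimension $d-|S|$ on the coordinates in $\{1,\dots,d-1\}\setminus S$, whereas if $d\notin S$ then $P(k,d)^g$ is a copy of $P(k,d-|S|)$ (with $P(k,1)=[-\tfrac k2,\tfrac k2]$ and $P(k,0)=\{0\}$ in the extreme cases).

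Next I would substitute the Ehrhart series. Using $\frac{(1+t)^m}{(1-t)^{m+1}}$ for the standard $m$-dimensional cross-polytope (as in the proof of Proposition~\ref{prop: cross poly ed reflection}) in the first case and Proposition~\ref{prop: ehrhart series of cross polytope} in the second, everything telescopes against the factor $(1-t)^{d-|S|}(1+t)^{|S|}$, and one finds $H^\ast[t](g)=(1+t)^d$ when $d\in S$, and $H^\ast[t](g)=(1+(k-1)t+kt^2)(1+t)^{d-2}=\Tilde{h}_{P(k,d)}(t)$ when $d\notin S$; the only care needed is checking that the cancellation of $(1+t)$-powers is still valid for the low-dimensional fixed polytopes ($m\in\{0,1\}$, where the formula of Proposition~\ref{prop: ehrhart series of cross polytope} is applied only formally). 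Since $\chi_2(g)=-1$ exactly when $d\in S$, the class function $g\mapsto H^\ast[t](g)$ is constant on $\{\chi_2=1\}$ and on $\{\chi_2=-1\}$, hence equals $f(t)\chi_1+h(t)\chi_2$ with $f+h=\Tilde{h}_{P(k,d)}(t)$ and $f-h=(1+t)^d$; integrality of $f$ and $h$ follows because $\{\chi_1,\chi_2\}$ is part of the $\ZZ$-basis of $R(G)$ given by the linear characters and $H^\ast[t]\in R(G)[[t]]$. Finally, reading off the coefficient of $t^j$ in $f=\tfrac12(\Tilde{h}_{P(k,d)}+(1+t)^d)$ and $h=\tfrac12(\Tilde{h}_{P(k,d)}-(1+t)^d)$ and simplifying with $\binom dj=\binom{d-2}{j}+2\binom{d-2}{j-1}+\binom{d-2}{j-2}$ and $\binom{d-1}{j-1}=\binom{d-2}{j-1}+\binom{d-2}{j-2}$ recovers the claimed $a_j$ and $b_j$ (these coincide with the $a_j,b_j$ of Proposition~\ref{prop: cross poly ed reflection}, as anticipated by the remark preceding the theorem).

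I do not expect a genuine obstacle here: all the inputs — the explicit Ehrhart series of $P(k,d)$ and of cross-polytopes, and the determinant factors — are already available, and the single new ingredient is the identification of $P(k,d)^g$ for an arbitrary coordinate reflection $g$, rather than only for the generators treated in Propositions~\ref{prop: cross polytope ei reflection} and \ref{prop: cross poly ed reflection}. The mild point to get right is the bookkeeping of $(1+t)$-factors for the degenerate fixed polytopes; doing so also yields a complete proof of the assertion, made just before the theorem, that the coefficients of $H^\ast[t]$ are integer combinations of $\chi_1$ and $\chi_2$ alone.
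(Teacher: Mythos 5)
Your proposal is correct and follows essentially the same route as the paper: evaluate $H^\ast[t]$ at each $g\in G$ via $\ehr(P(k,d)^g,t)$ and the determinant factor $(1-t)^{d-|S|}(1+t)^{|S|}$, observe the value depends only on $\chi_2(g)$, and solve the resulting two-equation system for the $\chi_1,\chi_2$ coefficients exactly as in Proposition~\ref{prop: cross poly ed reflection}. The paper in fact leaves the verification at non-generator elements implicit (deducing the theorem from Propositions~\ref{prop: cross polytope ei reflection} and \ref{prop: cross poly ed reflection}), so your identification of $P(k,d)^g$ for arbitrary $g$ just supplies the details the paper omits.
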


\begin{example}
Given two subsets $A$ and $B$ that lie in orthogonal subspaces of $\RR^N$, we denote by $A \oplus B = \conv(A \cup B) \subseteq \RR^N$ the \textit{free sum} of $A$ and $B$. Let $d \geq 3$ and $k = 1$. The polytope $P(k,d)$ has Ehrhart series
\[
\ehr(P(1,d), t) = \frac{(1+t+t^2+t^3)(1+t)^{d-3}}{(1-t)^{d+1}}.
\]
We note that this coincides with the Ehrhart series of the lattice polytope $Q_d \subseteq \RR^3 \times \RR^{d-3}$ given by $Q_d = \conv\{e_1,e_2,e_3, -e_1-e_2-e_3 \} \oplus [-1, 1]^{\oplus (d-3)}$. By a result of Stapledon \cite[Proposition~6.1]{stapledon2011equivariant}, the equivariant $H^\ast$-series of the simplex $S=\conv\{e_1,e_2,e_3, -e_1-e_2-e_3 \}$ is always effective.
If a group $G = \{1, \sigma \}$ acts on $Q_d$ with an action that factors $\sigma(x,y) = (\sigma|_{\RR^3}(x), \sigma|_{\RR^{d-3}}(y)) $ such that $\sigma|_{\RR^{d-3}}$ acts by a coordinate reflection, then the equivariant $H^\ast$-series of $Q_d$ is $(1+t)^{d-3}$ times the $H^\ast$-series of $S$, meaning that it is effective.

On the other hand, if we take the polytope $P(1,d)$ with respect to the action of $G = \{1, \sigma\}$ given by $\sigma(e_d) = -e_d$ and $\sigma(e_i) = e_i$ for all $i \in \{1, \dots, d-1\}$ then the equivariant $H^\ast$-series is not effective.
\end{example}

\bibliographystyle{plain}
\bibliography{references}

\begin{thebibliography}{1}

\bibitem{ardila2019permutahedron}
Federico Ardila, Mariel Supina, and Andr\'{e}s~R. Vindas-Mel\'{e}ndez.
\newblock The equivariant {E}hrhart theory of the permutahedron.
\newblock {\em Proc. Amer. Math. Soc.}, 148(12):5091--5107, 2020.

\bibitem{BJM2013}
Matthias Beck, Pallavi Jayawant, and Tyrrell~B. McAllister.
\newblock Lattice-point generating functions for free sums of convex sets.
\newblock {\em J. Combin. Theory Ser. A}, 120(6):1246--1262, 2013.

\bibitem{beck2007computing}
Matthias Beck and Sinai Robins.
\newblock {\em Computing the continuous discretely}, volume~61.
\newblock Springer, 2007.

\bibitem{curtis1966representation}
Charles~W Curtis and Irving Reiner.
\newblock {\em Representation theory of finite groups and associative
  algebras}, volume 356.
\newblock American Mathematical Soc., 1966.

\bibitem{elia2022techniques}
Sophia Elia, Donghyun Kim, and Mariel Supina.
\newblock Techniques in equivariant ehrhart theory.
\newblock {\em arXiv:2205.05900}, 2022.

\bibitem{isaacs1994character}
I~Martin Isaacs.
\newblock {\em Character theory of finite groups}, volume~69.
\newblock Courier Corporation, 1994.

\bibitem{ohsugi2012smooth}
Hidefumi Ohsugi and Kazuki Shibata.
\newblock Smooth fano polytopes whose ehrhart polynomial has a root with large
  real part.
\newblock {\em Discrete \& Computational Geometry}, 47(3):624--628, 2012.

\bibitem{stanley1980decompositions}
Richard~P. Stanley.
\newblock Decompositions of rational convex polytopes.
\newblock In J.~Srivastava, editor, {\em Combinatorial Mathematics, Optimal
  Designs and Their Applications}, volume~6 of {\em Annals of Discrete
  Mathematics}, pages 333--342. Elsevier, 1980.

\bibitem{stapledon2011equivariant}
Alan Stapledon.
\newblock Equivariant ehrhart theory.
\newblock {\em Advances in Mathematics}, 226(4):3622--3654, 2011.

\end{thebibliography}

\bigskip
\bigskip
\noindent
{\bf Authors' addresses:}

\noindent
Graduate School of Information Science and Technology,
Osaka University, Suita, Osaka 565-0871, Japan\\
E-mail address: {\tt oliver.clarke.crgs@gmail.com}

\noindent
Graduate School of Information Science and Technology,
Osaka University, Suita, Osaka 565-0871, Japan\\
E-mail address:  {\tt higashitani@ist.osaka-u.ac.jp}

\noindent
Graduate School of Information Science and Technology,
Osaka University, Suita, Osaka 565-0871, Japan\\
E-mail address:  {\tt max.koelbl@ist.osaka-u.ac.jp}

\end{document}